\newcommand{\Z}{\mathbb{Z}}
\newcommand{\SL}{\text{SL}_2(\mathbb{Z})}
\newcommand{\oo}[1]{\overline{#1}}
\newcommand{\I}{\begin{psmallmatrix}
    1 & 0 \\
    0 & 1
\end{psmallmatrix}}
\newtheorem{theorem}{Theorem}[section]
\newtheorem{definition}[theorem]{Definition}
\newtheorem{lemma}[theorem]{Lemma}
\newtheorem{proposition}[theorem]{Proposition}
\newtheorem*{remark}{Remark}
\theoremstyle{definition}
\newtheorem{example}[theorem]{Example}
\title{Fast Computation of Generalized Dedekind Sums}
\author{\vspace{-1mm}\small Preston Tranbarger \\ \vspace{-2mm}\scriptsize\href{mailto:prestontranbarger@tamu.edu}{prestontranbarger@tamu.edu} \\ \vspace{-2mm}\scriptsize Department of Mathematics \\ \vspace{-2mm}\scriptsize Texas A\&M University \\ \vspace{-2mm}\scriptsize College Station, TX 77843-3368, U.S.A. \and \vspace{-1mm}\small Jessica Wang \\ \vspace{-2mm}\scriptsize \href{mailto:jwang22@wpi.edu}{jwang22@wpi.edu} \\
\vspace{-2mm}\scriptsize Department of Mathematical Sciences \\ \vspace{-2mm}\scriptsize Worcester Polytechnic Institute \\
\vspace{-2mm}\scriptsize Worcester, MA 01609-2280, U.S.A.}
\date{}
\begin{document}

\maketitle

\begin{abstract}
We construct an algorithm that reduces the complexity for computing generalized Dedekind sums from exponential to polynomial time. We do so by using an efficient word rewriting process in group theory.
\end{abstract}

\section{Introduction and Main Result}

The classical Dedekind sum is well-studied both inside and outside of number theory due to its connections with the Dedekind eta function and its applications in topology and combinatorial geometry. For more background on classical Dedekind sums, we refer the reader to \cite{rademacher-grosswald}.

Let $h$ and $k$ be coprime integers with $k>0.$ The classical Dedekind sum is defined as

    \[s(h,k)=\sum_{n=1}^k B_1\bigg(\cfrac{n}{k}\bigg)B_1\bigg(\cfrac{hn}{k}\bigg),\]
where $B_1(x)$ is the first Bernoulli function

\[B_1(x)=\begin{cases}
			0, & \text{if $x\in\Z$}\\
            x-\lfloor x\rfloor -\frac{1}{2}, & \text{otherwise.}
		 \end{cases}\]
The classical Dedekind sum satisfies the following reciprocity property (\cite{rademacher-grosswald}):
 \[s(h,k)=-s(k,h)+\frac{1}{12}\bigg(\frac{h}{k}+\frac{1}{hk}+\frac{k}{h}\bigg)-\frac{1}{4}.\]
Using the definition of classical Dedekind sum, it is readily seen that it can be computed in $O(k)$ time. However, one can obtain an $O(\log(k))$ time algorithm to compute the classical Dedekind sum using the reciprocity property (\cite{log-time-eucld}).

The generalized Dedekind sum associated with newform Eisenstein series was introduced by Stucker, Vennos, and Young in \cite{SVY}. Since then, various aspects of generalized Dedekind sums have been studied, including the kernel (\cite{kernel}, \cite{kernel2}), the image (\cite{mitchPaper}), and their general behaviour (\cite{average}).

\begin{definition}\label{SVYDEF}
Let $\gamma=\begin{psmallmatrix}
a&b\\c&d
\end{psmallmatrix}\in\Gamma_0(q_1q_2)$ with primitive Dirichlet characters $\chi_1,\chi_2$ and respective conductors $q_1,q_2.$ Let $q_1,q_2>1$ and $\chi_1\chi_2(-1)=1,$ then
  $$S_{\chi_1,\,\chi_2}\begin{pmatrix}
    a & b \\
    c & d
\end{pmatrix}=\sum_{j=1}^{c}\sum_{i=1}^{q_1}\left(\oo{\chi_2(j)\chi_1(i)}B_1\left(\frac{j}{c}\right)B_1\left(\frac{n}{q_1}+\frac{aj}{c}\right)\right).$$   
\end{definition}

The generalized Dedekind sum has the following crossed homomorphism property.
\begin{lemma}[Crossed Homomorphism Property \cite{SVY}]\label{lemma-cross-hom}
Let $\gamma_1,\gamma_2\in\Gamma_0(q_1q_2).$ Then
\[S_{\chi_1,\chi_2}(\gamma_1\gamma_2)=S_{\chi_1,\chi_2}(\gamma_1)+\psi(\gamma_1)S_{\chi_1,\chi_2}(\gamma_2),\]
where $\psi\begin{psmallmatrix}
    a&b\\c&d
\end{psmallmatrix}=\chi_1\oo{\chi_2}(d).$
\end{lemma}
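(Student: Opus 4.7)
The identity is precisely the $\psi$-twisted cocycle condition for a $1$-cocycle on $\Gamma_0(q_1 q_2)$ valued in $\mathbb{C}$ on which $\Gamma_0(q_1 q_2)$ acts through the character $\psi$. The plan is to exploit this cohomological shape rather than attack the double sum head-on.

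The cleanest route goes through the newform Eisenstein series $E_{\chi_1,\chi_2}(z,s)$ from which $S_{\chi_1,\chi_2}$ originates in \cite{SVY}. Under any $\gamma \in \Gamma_0(q_1 q_2)$, this Eisenstein series satisfies a transformation law of the schematic form
\[
E_{\chi_1,\chi_2}(\gamma z, s) \;=\; \psi(\gamma)\, E_{\chi_1,\chi_2}(z, s) \;+\; R_{\chi_1,\chi_2}(\gamma; z, s),
\]
where the residual $R_{\chi_1,\chi_2}(\gamma;z,s)$ recovers a scalar multiple of $S_{\chi_1,\chi_2}(\gamma)$ after extracting the appropriate piece of its Fourier expansion. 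Granting this, the crossed homomorphism property falls out of associativity of the action: iterating the rule on $E(\gamma_1(\gamma_2 z),s)$ yields $\psi(\gamma_1)\psi(\gamma_2)\, E + \psi(\gamma_1)\, R(\gamma_2) + R(\gamma_1)$, which, compared with $E(\gamma_1\gamma_2 \cdot z, s) = \psi(\gamma_1\gamma_2)\, E + R(\gamma_1\gamma_2)$, isolates the desired identity on the residual pieces. One needs $\psi$ to be genuinely multiplicative; this is immediate because for $\gamma_1 \in \Gamma_0(q_1 q_2)$ one has $c_1 \equiv 0 \pmod{q_1 q_2}$, so the bottom-right entry of $\gamma_1\gamma_2$ reduces to $d_1 d_2$ modulo both conductors and $\chi_1\overline{\chi_2}$ factors across it.

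A direct combinatorial fallback is also available. Writing out $\gamma_1 \gamma_2$ entrywise, its lower-left entry is $c_1 a_2 + d_1 c_2$, so in $S_{\chi_1,\chi_2}(\gamma_1\gamma_2)$ the outer index $j$ runs over $\{1,\dots, c_1 a_2 + d_1 c_2\}$. The plan is to split this range into two nested sums compatible with the scales $c_1$ and $c_2$, use the periodicity of $B_1$ modulo $1$ to simplify the second Bernoulli factor, and then exploit the periodicity of $\chi_1, \chi_2$ modulo $q_1, q_2$ to pull the factor $\chi_1\overline{\chi_2}(d_1) = \psi(\gamma_1)$ out of one of the two blocks, identifying it with $S_{\chi_1,\chi_2}(\gamma_2)$ while the complementary block is $S_{\chi_1,\chi_2}(\gamma_1)$.

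The main obstacle in either approach is bookkeeping. In the Eisenstein series route, the delicate step is verifying that $R_{\chi_1,\chi_2}(\gamma;z,s)$ really does reduce to a scalar multiple of $S_{\chi_1,\chi_2}(\gamma)$ after the appropriate extraction, which amounts to unpacking the derivation of Definition \ref{SVYDEF} in \cite{SVY}. In the direct route, the obstacle is controlling the re-indexing across three different denominators $c_1$, $c_2$, and $c_1 a_2 + d_1 c_2$ while keeping careful track of which character twist ends up in which block. For this reason the Eisenstein series approach appears less error-prone and is the one to pursue first.
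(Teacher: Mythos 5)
The paper itself does not prove this lemma; it imports it from \cite{SVY}, and the proof there is indeed along the lines of your first route (the sum arises as the automorphy ``error'' of a holomorphic, log-$\eta$-type function built from the newform Eisenstein series, and the crossed homomorphism property follows by iterating that transformation law). Your identification of the cocycle structure is the right skeleton, and your check that $\psi$ is multiplicative on $\Gamma_0(q_1q_2)$ (since $c_1\equiv 0 \pmod{q_1q_2}$ forces the lower-right entry of $\gamma_1\gamma_2$ to be $\equiv d_1d_2$) is correct and is genuinely needed.

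However, as written the argument does not close, for two reasons. First, the object you transform is not quite right: the Eisenstein series $E_{\chi_1,\chi_2}(z,s)$ itself is exactly automorphic with nebentypus, $E_{\chi_1,\chi_2}(\gamma z,s)=\psi(\gamma)E_{\chi_1,\chi_2}(z,s)$, with no residual term; the Dedekind sum appears only in the transformation of an associated Eichler-type (antiderivative/positive-frequency) function. Second, and more seriously, if the residual $R(\gamma;z,s)$ is allowed to depend on $z$, then iterating gives
\[
R(\gamma_1\gamma_2;z,s)=\psi(\gamma_1)R(\gamma_2;z,s)+R(\gamma_1;\gamma_2 z,s),
\]
where the last term is evaluated at $\gamma_2 z$, not $z$; the desired identity on the extracted constants only ``falls out'' if $R(\gamma;\cdot,s)$ is constant in $z$ (equivalently, if the extraction commutes with $z\mapsto\gamma_2 z$). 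Establishing that the automorphy error of the correct function is exactly the constant $S_{\chi_1,\chi_2}(\gamma)$ is precisely the content you defer with ``granting this,'' and it is the real work of the proof---it amounts to reproducing the derivation of Definition~\ref{SVYDEF} in \cite{SVY}. The combinatorial fallback is likewise only a plan, with the three-denominator re-indexing left unresolved. So the proposal identifies the correct strategy but leaves the essential step unproven.
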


\begin{remark}
In Lemma \ref{lemma-cross-hom}, $\psi(\gamma)$ is trivial on $\Gamma_1(q_1q_2),$ so $S_{\chi_1,\chi_2}$ may be viewed as an element of $\text{Hom}(\Gamma_1(q_1 q_2),\mathbb{C}).$
\end{remark}

Note that it requires $O(cq_1)$ time to compute a generalized Dedekind sum from the definition. Similar to the classical Dedekind sums, we are interested in constructing a faster algorithm. Instead of using the reciprocity property, we provide an alternative approach using a word-rewriting process.

\begin{theorem}\label{thm:main}
Given primitive Dirichlet characters $\chi_1,\chi_2$ and respective conductors $q_1,q_2>1$ such that $\chi_1\chi_2(-1)=1.$ Let $\gamma=\begin{psmallmatrix}
a&b\\c&d
\end{psmallmatrix}\in\Gamma_0(q_1q_2)$. For fixed $q_1,q_2$, the time complexity of finding $S_{\chi_1,\chi_2}(\gamma)$ as a function of $\gamma$ is $O(\log(c))$.
\end{theorem}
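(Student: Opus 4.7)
The plan rests on iterating the crossed homomorphism property of Lemma~\ref{lemma-cross-hom}. If $\gamma=\gamma_1\gamma_2\cdots\gamma_n$ is any factorization of $\gamma$ inside $\Gamma_0(q_1q_2)$, then a straightforward induction gives
\[
S_{\chi_1,\chi_2}(\gamma)=\sum_{i=1}^{n}\psi(\gamma_1\cdots\gamma_{i-1})\,S_{\chi_1,\chi_2}(\gamma_i).
\]
The problem therefore reduces to (i) producing such a factorization of length $n=O(\log c)$, and (ii) evaluating each $S_{\chi_1,\chi_2}(\gamma_i)$ and $\psi(\gamma_i)$ in time $O(1)$, where the implicit constant is allowed to depend on the fixed $q_1,q_2$.

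For step (i), I would mimic the Euclidean-style decomposition that underlies the classical $O(\log k)$ algorithm. Using the standard generators $T=\begin{psmallmatrix}1&1\\0&1\end{psmallmatrix}$ and $S=\begin{psmallmatrix}0&-1\\1&0\end{psmallmatrix}$, every element of $\SL$ can be written as a word of length $O(\log c)$, produced by alternating $S$ with powers $T^{k_i}$ chosen so that the lower-left entry shrinks by successive remainders, exactly as in the Euclidean algorithm applied to $(a,c)$.

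The principal obstacle is that this factorization does \emph{not} live inside $\Gamma_0(q_1q_2)$, on which $S_{\chi_1,\chi_2}$ and the crossed homomorphism are defined. This is precisely where the word-rewriting step announced in the introduction is needed. Because $\Gamma_0(q_1q_2)$ has finite index in $\SL$, depending only on the fixed $q_1q_2$, a Schreier--Reidemeister style rewriting can convert the $\SL$-word into a word of comparable length in a fixed finite generating set of $\Gamma_0(q_1q_2)$. The key technical point I would have to verify is that one can maintain the running coset representative and perform the rewrite letter by letter with only $O(1)$ overhead per Euclidean step, keeping the resulting $\Gamma_0(q_1q_2)$-word of length $O(\log c)$ and keeping arithmetic on entries within $O(\log c)$-sized integers.

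Once the factorization is in hand, step (ii) is immediate: for fixed $q_1,q_2$ the chosen generating set of $\Gamma_0(q_1q_2)$ is finite, so the values of $S_{\chi_1,\chi_2}$ (via Definition~\ref{SVYDEF}) and of $\psi$ on each generator can be precomputed once and treated as constants. Summing the telescoping expression above then takes $O(\log c)$ arithmetic operations, yielding the claimed complexity.
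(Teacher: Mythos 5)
Your outline follows the paper's general strategy (Euclidean-style $S,T$-decomposition, Schreier--Reidemeister rewriting into a finite-index subgroup, precomputation on a finite alphabet, then the crossed homomorphism), but it leaves a genuine gap exactly at the point you flag as ``the key technical point I would have to verify,'' and that point is the actual content of the theorem. The $\SL$-word produced by the Euclidean algorithm has only $O(\log c)$ \emph{syllables} $T^{a_1}S T^{a_2}S\cdots$, but the exponents $a_i$ can be as large as $O(c)$, so spelled out in genuine letters the word has length $O(c)$. The classical Reidemeister rewriting (Theorem~\ref{thm:Reidemeister-Rewriting}) emits one Schreier generator per unit letter, so it converts this into a word of length $O(c)$ --- not ``comparable length'' --- in the subgroup generators. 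If instead you collect exponents (the paper's modified rewriting, Theorem~\ref{thm:modified-reidemeister}), you do get $O(\log c)$ factors, but they have the form $U\big(\oo{p_i},T^{a_i}\big)$ with $a_i$ unbounded, i.e.\ they range over an \emph{infinite} alphabet, so ``precompute the values on the finite generating set'' does not apply to them. Your proposal never explains how to bridge this tension, and without a further idea the argument stalls at either $O(c)$ work or an infinite precomputation.

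The paper's resolution is the identity of Lemma~\ref{lemma-T-cycle}: since $\oo{MT^N}=\oo{M}$ (Lemma~\ref{coset-T-cycle}, because $T^N\in\Gamma(N)\trianglelefteq\SL$), one has $U\big(\oo{M},T^{a}\big)=U\big(\oo{M},T^{N}\big)^{q}\,U\big(\oo{M},T^{r}\big)$ for $a=qN+r$. The huge exponent then enters only as the integer coefficient $q$ multiplying the precomputed value $S_{\chi_1,\chi_2}\big(U(\oo{M},T^N)\big)$; this ``multiply by $q$'' step is legitimate because the rewriting is taken with respect to $\Gamma_1(N)$ (after splitting $\gamma_0=\gamma_1 g$ with $g$ in a transversal of $\Gamma_1$ in $\Gamma_0$), where $\psi$ is trivial and $S_{\chi_1,\chi_2}$ is an honest homomorphism. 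Your variant, which rewrites directly into a generating set of $\Gamma_0(q_1q_2)$ and keeps track of the cocycle factors $\psi(\gamma_1\cdots\gamma_{i-1})$, could in principle be made to work, but you would still need an analogue of the $T^N$-periodicity trick to compress the powers $T^{a_i}$ into a bounded alphabet plus integer multiplicities; as written, that essential mechanism is missing, so the claimed $O(\log c)$ bound is not established.
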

\begin{remark}
The algorithm for Theorem \ref{thm:main} can be found in Section \ref{sec:algo}. We give the specific details of our model of computation in Section \ref{section:algo-analysis}. 
\end{remark}


Before diving into the technicalities of the algorithm, we provide the reader with a general outline. Given $\gamma\in\Gamma_1(q_1q_2)<\SL$ written as a word in the generators of $\SL$, we can apply the Reidemeister rewriting process to express it as a word in the elements of a particular generating set of $\Gamma_1(q_1q_2)$. By precomputing the Dedekind sum of each element of this generating set, we can use Lemma \ref{lemma-cross-hom} to compute any Dedekind sum. However, in using the Reidemeister rewriting process, the length of the word can be exponentially large in terms of the logarithms of the entries of $\gamma.$ Therefore, to achieve the polynomial time in Theorem \ref{thm:main}, we modify the Reidemeister rewriting process, as in Theorem \ref{thm:modified-reidemeister} below, to collect the exponents of successive letters in the rewritten word. In the specific case of $\Gamma_1(q_1q_2),$ we develop a useful identity in Lemma \ref{lemma-T-cycle} to ensure a finite alphabet.

\section{Preliminaries}
\subsection{General Preliminaries}
In this section we will define some general group theoretic definitions and results which will aid in the construction of the algorithm. 
For the rest of this subsection, we let $G$ be a finitely generated group and $H$ be a subgroup of $G$. 
We will work with specific groups in Section \ref{section-specific}.

\begin{definition}
We say $\mathcal{T}$ is a \emph{right transversal} of $H$ in $G$ if each right coset of $H$ in $G$ contains exactly one element of $\mathcal{T}$. Moreover, $\mathcal{T}$ must contain the identity.
\end{definition}

Note that a transversal differs from an arbitrary set of coset representatives in that it must contain an identity. This fact proves to be essential for later preliminaries.


\begin{definition}\label{bar-function-def}
Given a right transversal $\mathcal{T}$ of $H$ in $G$, a \emph{right coset representative function} for $\mathcal{T}$ is a mapping: $G\to \mathcal{T}$ via
$g \mapsto \oo{g}$, where $\oo{g}$ is the unique element in $\mathcal{T}$ such that $Hg=H\oo{g}$.

\end{definition}

We present a simple lemma which will be used repeatedly throughout this paper.

\begin{lemma}\label{nested-cosets}
Given a right transversal of $H$ in $G$ and $a,\,b\in G$,
$$\oo{ab}=\oo{\oo{a}b}.$$
\end{lemma}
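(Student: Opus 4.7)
The plan is to show that the right cosets $Hab$ and $H\overline{a}b$ coincide, and then invoke uniqueness of the transversal representative to conclude that both sides of the claimed identity pick out the same element of $\mathcal{T}$.

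First I would unpack the definition of $\overline{a}$: by Definition \ref{bar-function-def}, the element $\overline{a}$ is characterized by the property $Ha = H\overline{a}$. Right-multiplying this equation by $b$ yields $Hab = H\overline{a}b$, which is the key observation.

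Now, by Definition \ref{bar-function-def} applied to the element $ab \in G$, the value $\overline{ab}$ is the unique element of $\mathcal{T}$ lying in the coset $Hab$. Similarly, $\overline{\overline{a}b}$ is the unique element of $\mathcal{T}$ lying in the coset $H\overline{a}b$. Since these two cosets are equal by the previous step, and each coset contains exactly one transversal element, we conclude $\overline{ab} = \overline{\overline{a}b}$.

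There is no real obstacle here; the only subtlety worth flagging in the write-up is that the conclusion depends on $\mathcal{T}$ being a genuine transversal (one representative per coset), so that the coset uniquely determines its representative. This is precisely the hypothesis built into Definition \ref{bar-function-def}, so no additional work is required.
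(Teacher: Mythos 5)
Your proposal is correct and follows essentially the same route as the paper: both show $Hab = H\overline{a}b$ by right-multiplying $Ha = H\overline{a}$ by $b$, and then use uniqueness of the transversal representative in a coset to conclude. Your write-up merely makes the final uniqueness step more explicit than the paper does, which is fine.
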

\begin{proof}
By Definition \ref{bar-function-def}, $H(\oo{ab})=H(ab)=(Ha)b=(H\oo{a})b=H(\oo{a}b)=H(\oo{\oo{a}b}).$
\end{proof}

We continue by defining an important function and exploring some of its properties.
\begin{definition}\label{Ufunction-def}
Given a right transversal of $H$ in $G$ and $a,b\in G$, we define \[U(a,b)=ab(\oo{ab})^{-1}.\]
\end{definition}

\begin{lemma}\label{Ufunction-in-H}
Given a right transversal of $H$ in $G$ and $a,b\in G$, then
$U(a,b)\in H.$
\end{lemma}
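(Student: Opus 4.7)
The plan is to unpack the definition of $U(a,b)$ and directly invoke the defining property of the right coset representative function. Concretely, by Definition \ref{Ufunction-def} we have $U(a,b) = ab(\oo{ab})^{-1}$, so the task reduces to showing that $ab$ and $\oo{ab}$ lie in the same right coset of $H$, i.e. $Hab = H\oo{ab}$.

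First I would write the single-line chain $Hab = H\oo{ab}$, which is precisely the statement of Definition \ref{bar-function-def} applied to the element $ab \in G$. Then I would right-multiply both sides by $(\oo{ab})^{-1}$ to obtain
\[
H \cdot ab(\oo{ab})^{-1} = H,
\]
which by definition of coset equality means $ab(\oo{ab})^{-1} \in H$, that is, $U(a,b) \in H$.

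There is no real obstacle here; the only thing to be careful about is not to conflate the statement with the stronger identity used later (such as the one in Lemma \ref{nested-cosets}). The entire content is that $\oo{ab}$ is, by construction, a representative of the same coset as $ab$, so their product with the inverse of one lies in $H$. Since no calculation is required, I would keep the proof to essentially two lines and move on.
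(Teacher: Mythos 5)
Your proof is correct and is essentially identical to the paper's: both unpack $U(a,b)=ab(\oo{ab})^{-1}$, note $Hab=H\oo{ab}$ from Definition \ref{bar-function-def}, and multiply by $(\oo{ab})^{-1}$ to conclude $U(a,b)\in H$. Nothing to change.
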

\begin{proof}
By Definition \ref{bar-function-def}, $Hab=H\oo{ab}$, thus $Hab(\oo{ab})^{-1}=H$, so $ab(\oo{ab})^{-1}\in H$.
\end{proof}

Given a finite set of generators for a group, we use the information thus far to describe a set of generators for a given subgroup.

\begin{lemma}[Schreier's Lemma {\cite[Theorem 2.7]{reidemeister}}]\label{Schreier} Let $\mathcal{S}$ be a set which finitely generates $G$, and let $\mathcal{T}$ be a right transversal of $H$ in $G$.  The set of Schreier generators
$$\{U(t,\,s)\,:\,t\in\mathcal{T},\,s\in\mathcal{S}\}$$
generates $H$.
\end{lemma}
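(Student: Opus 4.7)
The plan is to show that every element of $H$ can be written as a product of the Schreier generators (and their inverses), using a telescoping trick. Since Lemma \ref{Ufunction-in-H} already gives $U(t,s)\in H$ for each Schreier generator, only the spanning direction needs attention.

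First I would take an arbitrary $h\in H$ and express it as a word $h=s_1s_2\cdots s_n$ in $\mathcal{S}\cup\mathcal{S}^{-1}$, which is possible since $\mathcal{S}$ generates $G$. Define a sequence of transversal elements by $t_0=e$ and $t_i=\oo{t_{i-1}s_i}$; iterated application of Lemma \ref{nested-cosets} shows $t_i=\oo{s_1s_2\cdots s_i}$. The key algebraic step is the telescoping identity
\[
h \;=\; (t_0 s_1 t_1^{-1})(t_1 s_2 t_2^{-1}) \cdots (t_{n-1} s_n t_n^{-1})\, t_n
   \;=\; U(t_0,s_1)\, U(t_1,s_2)\cdots U(t_{n-1},s_n)\, t_n,
\]
where the inner telescoping is immediate and each bracketed factor matches Definition \ref{Ufunction-def} since $t_i=\oo{t_{i-1}s_i}$.

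Next I would read off that the product of $U$-factors lies in $H$ by Lemma \ref{Ufunction-in-H}, and combining with $h\in H$ forces $t_n\in H$. But $t_n\in\mathcal{T}$, and since $\mathcal{T}$ contains the identity while each right coset of $H$ has exactly one representative in $\mathcal{T}$, the coset $H=He$ meets $\mathcal{T}$ only at $e$; hence $t_n=e$. This exhibits $h$ as a product of terms of the form $U(t_{i-1},s_i)$ with $s_i\in\mathcal{S}\cup\mathcal{S}^{-1}$.

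The last thing to reconcile is that the Schreier generating set only uses letters from $\mathcal{S}$, not from $\mathcal{S}^{-1}$, so I would show that every $U(t,s^{-1})$ with $s\in\mathcal{S}$ is already a product (in fact an inverse) of Schreier generators. Setting $t'=\oo{ts^{-1}}$, we have $Ht=Ht's$, so $\oo{t's}=t$ (as $t\in\mathcal{T}$), giving $U(t',s)=t'st^{-1}$ and therefore $U(t,s^{-1})=ts^{-1}t'^{-1}=U(t',s)^{-1}$. This identity is the main technical obstacle: it is routine but requires carefully using that the bar function respects right cosets (Definition \ref{bar-function-def}) and that $\mathcal{T}$ is fixed pointwise by the bar. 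With that in hand, the telescoping expression for $h$ becomes a word in the Schreier generators and their inverses, completing the proof.
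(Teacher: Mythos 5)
Your proof is correct and complete: the telescoping decomposition $h = U(t_0,s_1)U(t_1,s_2)\cdots U(t_{n-1},s_n)\,t_n$, the observation that $t_n$ lies in $H\cap\mathcal{T}=\{e\}$ because the transversal contains the identity, and the identity $U(t,s^{-1})=U(\oo{ts^{-1}},s)^{-1}$ handling inverse letters are all verified correctly using Definition \ref{bar-function-def} and Lemmas \ref{nested-cosets} and \ref{Ufunction-in-H}. The paper itself does not prove this lemma (it cites \cite[Theorem 2.7]{reidemeister}), and your argument is exactly the standard proof given there, so there is nothing to reconcile.
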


\begin{remark}
We say a set generates a group if every element in the group can be expressed as a combination of elements in the set and their inverses.

\end{remark}

We now describe a rewriting process.

\begin{theorem}[Reidemeister Rewriting Process {\cite[Corollary 2.7.2]{reidemeister}}]\label{thm:Reidemeister-Rewriting}
Let $G=\langle g_1,\cdots, g_n\rangle.$ Let $h=g_{q_1}^{\epsilon_1}g_{q_2}^{\epsilon_2}\cdots g_{q_r}^{\epsilon_r}\in H$ (where $\epsilon_k=\pm 1$) be a word in the $g_i$. Fix a right transversal of $H$ in $G$. Define the mapping $\tau$ of the word $h$ by
\[\tau(h)=U(p_1,g_{q_1})^{\epsilon_1}U(p_2,g_{q_2})^{\epsilon_2}\cdots U(p_r,g_{q_r})^{\epsilon_r},\]
where 
\[p_k = \begin{cases}
    \oo{g_{q_1}^{\epsilon_1}g_{q_2}^{\epsilon_2}\cdots g_{q_{k-1}}^{\epsilon_{k-1}}}  & \text{if } \epsilon_k=1 \\
    \oo{g_{q_1}^{\epsilon_1}g_{q_2}^{\epsilon_2}\cdots g_{q_k}^{\epsilon_k}}  & \text{if } \epsilon_k=-1.
\end{cases}\]
Then $\tau(h)=h,$ for all $h\in H.$
\end{theorem}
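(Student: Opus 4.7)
The plan is to prove a slightly stronger statement by induction on the length of the word, rather than trying to tackle the final equality head-on. For an arbitrary word $w_k = g_{q_1}^{\epsilon_1}g_{q_2}^{\epsilon_2}\cdots g_{q_k}^{\epsilon_k}$ in the generators of $G$ (not necessarily landing in $H$), I would extend the definition of $\tau$ by the same formula and show
\[\tau(w_k) = w_k(\oo{w_k})^{-1}\]
for every $k\geq 0$. The theorem then follows immediately by taking $k = r$: since $h\in H$, its coset representative $\oo{h}$ is the identity of $\mathcal{T}$, so $\tau(h) = h\cdot e^{-1} = h$.

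For the induction, the base case $k=0$ is trivial: the empty product gives $\tau(w_0)=e$ and $\oo{w_0}=\oo{e}=e$, as required. For the inductive step, I would assume $\tau(w_{k-1}) = w_{k-1}(\oo{w_{k-1}})^{-1}$ and split into the two cases prescribed by the definition of $p_k$. When $\epsilon_k = 1$, we have $p_k = \oo{w_{k-1}}$, and unwinding the definition of $U$ from Definition \ref{Ufunction-def} gives
\[\tau(w_k) = w_{k-1}(\oo{w_{k-1}})^{-1}\cdot\oo{w_{k-1}}\,g_{q_k}(\oo{\oo{w_{k-1}}\,g_{q_k}})^{-1}.\]
Applying Lemma \ref{nested-cosets} to the innermost bar collapses $\oo{\oo{w_{k-1}}\,g_{q_k}}$ to $\oo{w_k}$, and the factors $(\oo{w_{k-1}})^{-1}\oo{w_{k-1}}$ cancel, leaving $w_k(\oo{w_k})^{-1}$ as desired.

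When $\epsilon_k = -1$, the definition gives $p_k = \oo{w_k}$. Here I would carefully expand the inverse:
\[U(\oo{w_k},g_{q_k})^{-1} = \oo{\oo{w_k}\,g_{q_k}}\cdot g_{q_k}^{-1}(\oo{w_k})^{-1},\]
then use Lemma \ref{nested-cosets} to rewrite $\oo{\oo{w_k}\,g_{q_k}} = \oo{w_k g_{q_k}} = \oo{w_{k-1}}$. Substituting into $\tau(w_k) = \tau(w_{k-1})\cdot U(p_k,g_{q_k})^{-1}$ and using the inductive hypothesis, the adjacent $(\oo{w_{k-1}})^{-1}\oo{w_{k-1}}$ again cancels, yielding $w_{k-1}g_{q_k}^{-1}(\oo{w_k})^{-1} = w_k(\oo{w_k})^{-1}$.

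The main obstacle is purely bookkeeping: the asymmetry in the definition of $p_k$ between the $\epsilon_k = \pm 1$ cases forces us to invert $U(p_k,g_{q_k})$ correctly in the second case, and one must apply Lemma \ref{nested-cosets} with the right choice of $a$ and $b$ for the $\oo{w_k}$ appearing inside the $U$ to collapse to $\oo{w_{k-1}}$. Once the induction is set up with the slightly stronger invariant $\tau(w_k) = w_k(\oo{w_k})^{-1}$, the rest is a direct verification.
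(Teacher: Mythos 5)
Your proposal is correct and complete. Note that the paper itself does not prove this statement --- it quotes it from the cited reference --- so there is no internal proof to compare against; your induction on the word length with the strengthened invariant $\tau(w_k)=w_k(\oo{w_k})^{-1}$ is exactly the standard textbook argument for the Reidemeister rewriting process. Both cases of the inductive step are handled correctly: for $\epsilon_k=1$ the cancellation $(\oo{w_{k-1}})^{-1}\oo{w_{k-1}}$ together with Lemma \ref{nested-cosets} gives $w_kig(\oo{w_k}\big)^{-1}$, and for $\epsilon_k=-1$ your expansion $U(\oo{w_k},g_{q_k})^{-1}=\oo{\oo{w_k}g_{q_k}}\,g_{q_k}^{-1}(\oo{w_k})^{-1}$ combined with $\oo{\oo{w_k}g_{q_k}}=\oo{w_{k-1}}$ is the right bookkeeping. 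One point worth making explicit in the final step: concluding $\oo{h}=e$ for $h\in H$ uses precisely the requirement that the transversal contains the identity (so the unique representative of the coset $H$ itself is $e$), which is the feature of transversals the paper flags as essential; with that observation spelled out, the argument is airtight.
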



The Reidemeister rewriting process allows us to express a word in the generators of $G$ as a word in the Schreier generators of $H$ (using Lemma \ref{Schreier}). 

\begin{example}\label{ex:long-terms}
Let $G=\langle g_1,\cdots, g_n\rangle$, and $H$ be a subgroup of $G$. Let $h=g_1g_1g_1g_2^{-1}g_2^{-1}\in H$, then by Theorem \ref{thm:Reidemeister-Rewriting}, 

\begin{equation}\label{eq:reidemeister}
    \tau(h)=U\big(\oo{1},g_1\big)U\big(\oo{g_1},g_1\big)U\big(\oo{g_1^2},g_1\big)U\big(\oo{g_1^3g_2^{-1}},g_2\big)^{-1}U\big(\oo{g_1^3g_2^{-2}},g_2\big)^{-1}=h.
\end{equation}
\end{example}

Note that Theorem \ref{thm:Reidemeister-Rewriting} requires $\epsilon_k=\pm 1$ (for example, $h$ must be written as $g_1g_1g_1g_2^{-1}g_2^{-1},$ not $g_1^3g_2^{-2}$). Since the length of $\tau(h)$ is the same as the length of $h$, this rewriting process is often inefficient. We provide Theorem \ref{thm:modified-reidemeister} to reduce the number of computations, which requires the lemma below.


\begin{lemma}\label{lemma:exponent}
Let $a,b\in G$ and $k\in\mathbb{Z}_{>0}$. Given a right transversal of $H$ in $G$, the following product identities hold:
\begin{align}
    U\big(\oo{a},\,b^k\big)&=U\big(\oo{a},\,b\big)U\big(\oo{ab},\,b\big)\ldots U\big(\oo{ab^{k-1}},\,b\big) \label{eq-exponents},\\ 
    U\big(\oo{a},\,b^{-k}\big)&=U\big(\oo{ab^{-1}},\,b\big)^{-1}U\big(\oo{ab^{-2}},\,b\big)^{-1}\ldots U\big(\oo{ab^{-k}},\,b\big)^{-1}.\label{eq-inverse-exp}
\end{align}
\end{lemma}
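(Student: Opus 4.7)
The plan is to prove both identities directly by telescoping, avoiding any induction. The key observation comes from combining Definition \ref{Ufunction-def} with Lemma \ref{nested-cosets}: since $\oo{\oo{ab^i}\cdot b}=\oo{ab^{i+1}}$, we have
\[U(\oo{ab^i},\,b)=\oo{ab^i}\cdot b\cdot(\oo{ab^{i+1}})^{-1}.\]
Each such factor ends in $(\oo{ab^{i+1}})^{-1}$ and the next factor begins with $\oo{ab^{i+1}}$, so the successive pieces will cancel in pairs.

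For \eqref{eq-exponents}, I would simply expand the product $\prod_{i=0}^{k-1} U(\oo{ab^i},b)$ using the displayed identity above. The internal cancellation of $(\oo{ab^{i+1}})^{-1}$ against the adjacent $\oo{ab^{i+1}}$ collapses everything to $\oo{a}\cdot b^k\cdot(\oo{ab^k})^{-1}$. One last application of Lemma \ref{nested-cosets}, applied to $\oo{\oo{a}\cdot b^k}$, identifies this expression with $U(\oo{a},\,b^k)$, finishing the first identity.

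For \eqref{eq-inverse-exp}, the same idea works after inversion. Inverting the displayed identity (with $i$ replaced by $-i$) yields
\[U(\oo{ab^{-i}},\,b)^{-1}=\oo{ab^{-(i-1)}}\cdot b^{-1}\cdot(\oo{ab^{-i}})^{-1},\]
and then the product $\prod_{i=1}^{k}U(\oo{ab^{-i}},\,b)^{-1}$ telescopes by exactly the same mechanism, collapsing to $\oo{a}\cdot b^{-k}\cdot(\oo{ab^{-k}})^{-1}=U(\oo{a},\,b^{-k})$.

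I do not foresee any real obstacle beyond careful index bookkeeping, particularly keeping the signs straight in the second identity. The substance of the lemma is really just that the $U$-function factors compatibly with $\oo{\,\cdot\,}$, so that successive Schreier generators splice together cleanly; this is exactly the property that will allow the modified Reidemeister rewriting of Theorem \ref{thm:modified-reidemeister} to compress long runs of identical letters into a single packaged expression rather than handling them one symbol at a time.
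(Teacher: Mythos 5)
Your proposal is correct and follows essentially the same route as the paper: expand each factor via Definition \ref{Ufunction-def}, use Lemma \ref{nested-cosets} to rewrite $\oo{\oo{ab^i}b}$ as $\oo{ab^{i+1}}$, and telescope. Your explicit treatment of the inverted factors in \eqref{eq-inverse-exp} is exactly the ``care with inverses'' the paper leaves to the reader, so nothing is missing.
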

\begin{proof}
By Definition \ref{Ufunction-def} the right hand side of (\ref{eq-exponents}) equals
\begin{equation}\label{eq-rhs}
    \oo{a}b\left(\oo{\oo{a}b}\right)^{-1}\oo{ab}b\left(\oo{\oo{ab}b}\right)^{-1}\ldots\oo{ab^{k-1}}b\left(\oo{\oo{ab^{k-1}}b}\right)^{-1}.
\end{equation}
Applying Lemma \ref{nested-cosets}, this simplies to
\begin{equation}\label{whentheumwhenthethewhenthe}
\oo{a}b\big(\oo{ab}\big)^{-1}\oo{ab}b\big(\oo{ab^2}\big)^{-1}\ldots\oo{ab^{k-1}}b\left(\oo{\oo{a}b^k}\right)^{-1}.
\end{equation}
Note that many terms cancel, so (\ref{whentheumwhenthethewhenthe}) becomes
$\oo{a}b^k\big(\oo{\oo{a}b^k}\big)^{-1}=U\big(\oo{a},\,b^k\big).$
The proof of (\ref{eq-inverse-exp}) follows in a similar manner, though care is required in handling the inverses. 
\end{proof}



\begin{theorem}[Modified Reidemeister Rewriting Process]\label{thm:modified-reidemeister}
Given a right transversal of $H$ in $G$, let $G=\langle g_1,\cdots, g_n\rangle$. Let $h=g_{q_1}^{a_1}g_{q_2}^{a_2}\cdots g_{q_r}^{a_r}\in H$ (where $a_i\in\Z_{\neq 0}$) be a word in powers of the $g_i$. Define the mapping $\tau$ of the word $h$ by
\[\tau(h)=U(p_1,g_{q_1}^{a_1})U(p_2,g_{q_2}^{a_2})\cdots U(p_r,g_{q_r}^{a_r}),\]
where 
\[p_k = 
    \oo{g_{q_1}^{a_1}g_{q_2}^{a_2}\cdots g_{q_{k-1}}^{a_{k-1}}}. 
\]
Then $\tau(h)=h,$ for all $h\in H.$
\end{theorem}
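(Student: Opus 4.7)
The plan is a direct telescoping argument, essentially a heterogeneous-exponent analogue of the manipulation carried out in Lemma~\ref{lemma:exponent}. Starting from the definition of $\tau(h)$, I would expand each factor using Definition~\ref{Ufunction-def}, yielding
\[U(p_k, g_{q_k}^{a_k}) = p_k\, g_{q_k}^{a_k}\, \bigl(\oo{p_k\, g_{q_k}^{a_k}}\bigr)^{-1}.\]
The key observation is then an application of Lemma~\ref{nested-cosets} to the trailing bar: since $p_k = \oo{g_{q_1}^{a_1}\cdots g_{q_{k-1}}^{a_{k-1}}}$, we have
\[\oo{p_k\, g_{q_k}^{a_k}} = \oo{\oo{g_{q_1}^{a_1}\cdots g_{q_{k-1}}^{a_{k-1}}}\, g_{q_k}^{a_k}} = \oo{g_{q_1}^{a_1}\cdots g_{q_k}^{a_k}} = p_{k+1},\]
where I adopt the natural convention $p_{r+1} := \oo{g_{q_1}^{a_1}\cdots g_{q_r}^{a_r}} = \oo{h}$. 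Hence each factor simplifies to $U(p_k, g_{q_k}^{a_k}) = p_k\, g_{q_k}^{a_k}\, p_{k+1}^{-1}$.

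Next I would carry out the telescoping. Multiplying these factors in order gives
\[\tau(h) = \bigl(p_1\, g_{q_1}^{a_1}\, p_2^{-1}\bigr)\bigl(p_2\, g_{q_2}^{a_2}\, p_3^{-1}\bigr)\cdots\bigl(p_r\, g_{q_r}^{a_r}\, p_{r+1}^{-1}\bigr) = p_1\, h\, p_{r+1}^{-1},\]
the adjacent $p_k^{-1} p_k$ pairs cancelling cleanly because we never had to split any $g_{q_k}^{a_k}$ into individual letters with alternating signs.

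Finally, I would dispose of the endpoint terms using the defining property of a right transversal. The factor $p_1 = \oo{e}$ is the unique transversal element in the coset $He = H$, and by definition a right transversal must contain the identity; so $p_1 = e$. Similarly, $p_{r+1} = \oo{h}$ is the transversal element in $Hh$, but the hypothesis $h \in H$ forces $Hh = H$, so again $p_{r+1} = e$. Therefore $\tau(h) = e \cdot h \cdot e^{-1} = h$. There is no serious obstacle here once the telescoping is set up correctly; the only subtlety is recognizing that both endpoint transversal terms collapse to the identity, and this is precisely where the requirement that $\mathcal{T}$ contain the identity (and not merely some choice of coset representatives) is used.
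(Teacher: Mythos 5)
Your proof is correct, but it takes a different route from the paper. The paper disposes of this theorem in one line: it re-expands $h$ into a word with exponents $\pm 1$, invokes the classical Reidemeister rewriting process (Theorem \ref{thm:Reidemeister-Rewriting}, cited from the literature) to get $\tau(h)=h$ in that form, and then uses Lemma \ref{lemma:exponent} to recombine each run of identical letters into a single factor $U(p_k,g_{q_k}^{a_k})$. You instead prove the identity directly: expanding each factor via Definition \ref{Ufunction-def}, using Lemma \ref{nested-cosets} to identify $\oo{p_k\,g_{q_k}^{a_k}}$ with $p_{k+1}$, telescoping to $p_1\,h\,p_{r+1}^{-1}$, and then using the requirement that the transversal contain the identity (together with $h\in H$) to collapse $p_1=\oo{e}=e$ and $p_{r+1}=\oo{h}=e$. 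Your argument is self-contained and arguably cleaner: it needs neither the black-box Theorem \ref{thm:Reidemeister-Rewriting} nor Lemma \ref{lemma:exponent}, it never splits the powers $g_{q_k}^{a_k}$ into single letters, and in fact it works verbatim for any factorization $h=b_1b_2\cdots b_r$ with $b_i\in G$, so it subsumes the original rewriting process as the special case $a_i=\pm 1$. What the paper's route buys is modularity and brevity at this point in the exposition: the telescoping work is localized in the proof of Lemma \ref{lemma:exponent} (whose cancellation pattern your argument essentially reproduces at the level of the whole word), and the correctness of the underlying rewriting is delegated to the cited reference. You also correctly isolate the one genuinely delicate point, namely that both endpoint terms collapse only because a transversal is required to contain the identity, which is exactly the distinction from an arbitrary set of coset representatives that the paper flags after its definition.
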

\begin{proof}
This follows by applying Lemma \ref{lemma:exponent} to Theorem \ref{thm:Reidemeister-Rewriting}.
\end{proof}

We illustrate Theorem \ref{thm:modified-reidemeister} and its proof by an example.

\begin{example}
Continuing with the assumptions in Example \ref{ex:long-terms}, we now write 
$h=g_1g_1g_1g_2^{-1}g_2^{-1}=g_1^3g_2^{-2}\in H$. We want to show (as Theorem \ref{thm:modified-reidemeister} claims) that $\tau(h)=U(\oo{1},g_1^3)U(\oo{g_1^3},g_2^{-2}).$
From Lemma \ref{lemma:exponent}, we have
\begin{align*}
    U(\oo{1},g_1^3)&=U(\oo{1},g_1)U(\oo{g_1},g_1)U(\oo{g_1^2},g_1),\\
    U(\oo{g_1^3},g_2^{-2})&=U(\oo{g_1^3g_2^{-1}},g_2)^{-1}U(\oo{g_1^3g_2^{-2}},g_2)^{-1}.
\end{align*}
So (\ref{eq:reidemeister}) becomes 
\[\tau(h)=U(\oo{1},g_1^3)U(\oo{g_1^3},g_2^{-2})=h,\]
which is in the form of Theorem \ref{thm:modified-reidemeister}.
\end{example}
As desired, this process provides us with a product expansion with far fewer terms than that of Theorem \ref{thm:Reidemeister-Rewriting}.





\subsection{Specific Preliminaries}\label{section-specific}

Let us now consider the subgroup $\Gamma_1(N)$ of $\SL$.

\begin{definition}
Let
$$S=\begin{pmatrix}
    0 & -1 \\
    1 &  0
\end{pmatrix},\; T=\begin{pmatrix}
    1 & 1 \\
    0 & 1
\end{pmatrix}.$$
\end{definition}

The following lemma is well known.

\begin{lemma}[{\cite[Theorem 1.1]{iwaniectopics}}]\label{TS-Decomposition} We have
$$\SL=\langle S,\,T\rangle.$$
More specifically, any matrix $M\in\SL$ can be decomposed into the following form:
\begin{align}\label{eq:TS-decomp}
    M=\begin{pmatrix}
    a & b \\
    c & d
\end{pmatrix}=\pm T^{a_1}ST^{a_2}S\ldots T^{a_{r-1}}ST^{a_r}.
\end{align}

Note that $-I=S^2$.
\end{lemma}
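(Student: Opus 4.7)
The statement is classical and reduces to running the Euclidean algorithm on the first column of $M$. My plan is to induct on $|c|$, using explicit formulas for how $T$ and $S$ act by left multiplication. First I would record the two computations
\[
T^n \begin{pmatrix} a & b \\ c & d \end{pmatrix} = \begin{pmatrix} a+nc & b+nd \\ c & d \end{pmatrix}, \qquad S\begin{pmatrix} a & b \\ c & d \end{pmatrix} = \begin{pmatrix} -c & -d \\ a & b \end{pmatrix},
\]
together with the identity $S^2 = -I$, which both gives the claim $-I = S^2$ and tells us $S^{-1} = -S$.

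The base case is $c = 0$: then $\det M = ad = 1$ forces $a = d \in \{\pm 1\}$, so $M = \pm T^{\pm b}$, already in the required form with $r = 1$ and no $S$ factors. For the inductive step, assume $c \neq 0$ and write $a = qc + r$ with $0 \le r < |c|$. Then $T^{-q}M$ has upper-left entry $r$, and left-multiplying by $S$ produces a matrix whose lower-left entry equals $r$, strictly smaller in absolute value than $|c|$. By the inductive hypothesis,
\[
S T^{-q} M = \pm T^{b_1} S T^{b_2} S \cdots S T^{b_s}.
\]
Solving for $M$ and using $S^{-1} = -S$ yields
\[
M = T^q S^{-1} \bigl(\pm T^{b_1} S T^{b_2} S \cdots S T^{b_s}\bigr) = \mp T^q S T^{b_1} S T^{b_2} S \cdots S T^{b_s},
\]
which is again the prescribed alternating shape $\pm T^{a_1} S T^{a_2} S \cdots S T^{a_r}$. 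Since $|c|$ strictly decreases at each step, the induction terminates.

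There is no real mathematical obstacle here; the only thing that requires care is bookkeeping. One must track the overall $\pm$ sign as $S^{-1}$ is swapped for $-S$, and observe that the alternating $T$-$S$-$T$-$S$ pattern is automatically preserved since every induction step prepends a fresh $T^q$ (even when $q = 0$), so two $S$'s never become adjacent in the output. With that observation the decomposition \eqref{eq:TS-decomp} is immediate, and $\SL = \langle S, T \rangle$ is then just the existence statement.
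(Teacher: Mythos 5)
Your proof is correct, and it is exactly the standard Euclidean-algorithm argument that the paper implicitly relies on: the paper gives no proof of this lemma, citing Iwaniec and remarking only that the exponents $a_i$ are obtained by a variant of the Euclidean algorithm, which is precisely your induction on $|c|$ via left multiplication by $T^{-q}$ and $S$. The sign bookkeeping with $S^{-1}=-S$ and the base case $c=0$ are handled correctly, so nothing is missing.
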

\begin{remark}
One can quickly compute the values of the $a_i$ via a variant on the Euclidean algorithm.
\end{remark}
\begin{remark}
In (\ref{eq:TS-decomp}), the sum of the $\abs{a_i}$ grows as $O(c)$ but $r$ grows as $O(\log(c))$ (this follows as a consequence of the above remark and {\cite[Section 4.5.3]{knuth}}).  
This is significant because the original Reidemeister rewriting process treats $M$ as a word in $S$ and $T$ with length $(\abs{a_1}+\cdots +\abs{a_r})+r=O(c)$. The modified Reidemeister rewriting process treats $M$ as a word in the powers of $S$ and $T$ with length $2r$ which grows as $O(\log(c))$.

\end{remark}

In the case of $\Gamma_1(N)$ in $\SL$, the modified Reidemeister rewriting process allows us to express a word in the generators of $\SL$ as a word in the elements of $\Gamma_1(N)$ with the form $U(l,g^k)$, where $l$ is in a right transversal of $\Gamma_1(N)$ in $\SL$ and $g\in\{S,T\}$ and $k\in\mathbb{Z}$. However, since $k$ is arbitrarily large, this set of elements is infinite. We present a way to reduce this infinite set to a finite set.


\begin{lemma}\label{coset-T-cycle}
Given a right transversal of $\Gamma_1(N)$ in $\SL$,
$$\oo{MT^N}=\oo{M}$$
for all $M\in \SL$.
\end{lemma}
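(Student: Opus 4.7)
The plan is to unwind Definition~\ref{bar-function-def}: showing $\oo{MT^N} = \oo{M}$ is equivalent to showing $\Gamma_1(N) MT^N = \Gamma_1(N) M$, and this in turn reduces to the assertion that $(MT^N)M^{-1} \in \Gamma_1(N)$ for every $M \in \SL$. Note that the choice of transversal plays no role in this reformulation, since membership in a right coset is intrinsic.

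From there the argument is a direct matrix computation. Writing $M = \begin{psmallmatrix} a & b \\ c & d \end{psmallmatrix}$, so that $M^{-1} = \begin{psmallmatrix} d & -b \\ -c & a \end{psmallmatrix}$, I would expand the product $MT^N M^{-1}$, collecting the cross terms and using $ad - bc = 1$ to simplify the diagonal. The result should be
\[
MT^N M^{-1} = \begin{pmatrix} 1 - Nac & Na^2 \\ -Nc^2 & 1 + Nac \end{pmatrix}.
\]
The diagonal entries are manifestly $\equiv 1 \pmod{N}$ and the lower-left entry is $\equiv 0 \pmod{N}$, so this matrix lies in $\Gamma_1(N)$, which completes the proof.

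No genuine obstacle arises; the only care needed is bookkeeping the cross terms and invoking $\det M = 1$. Conceptually, the lemma records a property stronger than $T^N \in \Gamma_1(N)$: although $\Gamma_1(N)$ is not normal in $\SL$, the particular element $T^N$ is $\SL$-conjugate only into $\Gamma_1(N)$, and it is precisely this invariance that makes the right coset of $M$ undisturbed by right multiplication by $T^N$. As a sanity check one can alternatively appeal to the classification of right cosets of $\Gamma_1(N)$ by the bottom row modulo $N$: the bottom row of $MT^N$ is $(c,\, cN+d) \equiv (c, d) \pmod{N}$, agreeing with that of $M$, which again forces $\oo{MT^N} = \oo{M}$.
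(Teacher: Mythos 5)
Your proof is correct, and the reduction is the same as the paper's: both arguments boil down to showing $MT^NM^{-1}\in\Gamma_1(N)$, which forces $\Gamma_1(N)MT^N=\Gamma_1(N)M$ and hence $\oo{MT^N}=\oo{M}$, independently of which transversal is fixed. Where you differ is in how that membership is established. The paper argues structurally: $T^N$ lies in the principal congruence subgroup $\Gamma(N)$, which \emph{is} normal in $\SL$, so $MT^NM^{-1}\in\Gamma(N)<\Gamma_1(N)$ with no computation at all. You instead compute the conjugate explicitly, obtaining $\begin{psmallmatrix}1-Nac & Na^2\\ -Nc^2 & 1+Nac\end{psmallmatrix}$ (the computation checks out), and read off the congruence conditions; note that your matrix is in fact congruent to the identity modulo $N$ in every entry, so your calculation silently reproves the special case of normality of $\Gamma(N)$ that the paper quotes. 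The paper's route is shorter and explains the invariance conceptually (your remark that $T^N$ conjugates only into $\Gamma_1(N)$ is precisely the statement $T^N\in\Gamma(N)\trianglelefteq\SL$), while yours is more elementary and self-contained. Your closing sanity check via the bottom-row classification of $\Gamma_1(N)\backslash\SL$ (the paper's Lemma \ref{cosets-g1-in-sl2z}) is also a complete proof on its own, arguably the quickest of the three.
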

\begin{proof}
Note $T^N\in\Gamma(N)$. Since $\Gamma(N)$ is normal in $\SL$, we have  $MT^NM^{-1}\in\Gamma(N)<\Gamma_1(N)$ for all $M\in\SL$. Thus $\Gamma_1(N)MT^N=\Gamma_1(N)M$ and $\oo{MT^N}=\oo{M}$.
\end{proof}

\begin{lemma}\label{lemma-T-cycle}
Let $a=qN+r$ for $0\leq r<N$ and let $M\in\SL$. Given a right transversal of $\Gamma_1(N)$ in $\SL$,
$$U\big(\oo{M},\,T^a\big)=U^q\big(\oo{M},T^N\big)U\big(\oo{M},\,T^r\big).$$
\end{lemma}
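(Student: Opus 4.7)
The plan is to expand both sides of the claimed identity via the definition $U(x,y) = xy(\oo{xy})^{-1}$ and then reduce everything using the ``$T^N$ stabilizes cosets'' fact from Lemma \ref{coset-T-cycle}.

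First I would compute the right-hand side. By Definition \ref{Ufunction-def} together with Lemma \ref{nested-cosets},
\[
U(\oo{M},T^N) = \oo{M}\,T^N\,\bigl(\oo{\oo{M}T^N}\bigr)^{-1} = \oo{M}\,T^N\,\bigl(\oo{MT^N}\bigr)^{-1},
\]
and Lemma \ref{coset-T-cycle} replaces $\oo{MT^N}$ with $\oo{M}$, giving the ``conjugation-like'' expression $U(\oo{M},T^N) = \oo{M}\,T^N\,\oo{M}^{-1}$. This is the key observation: its $q$-th power telescopes cleanly to
\[
U^q(\oo{M},T^N) = \oo{M}\,T^{qN}\,\oo{M}^{-1}.
\]
Multiplying by $U(\oo{M},T^r) = \oo{M}\,T^r\,(\oo{MT^r})^{-1}$, the factor $\oo{M}^{-1}\oo{M}$ in the middle cancels, leaving
\[
U^q(\oo{M},T^N)\,U(\oo{M},T^r) = \oo{M}\,T^{qN+r}\,(\oo{MT^r})^{-1} = \oo{M}\,T^a\,(\oo{MT^r})^{-1}.
\]

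For the left-hand side, $U(\oo{M},T^a) = \oo{M}\,T^a\,(\oo{MT^a})^{-1}$ by the same appeal to Lemma \ref{nested-cosets}. So it remains to identify $\oo{MT^a}$ with $\oo{MT^r}$. Iterating Lemma \ref{coset-T-cycle} $q$ times yields $\oo{MT^{qN}} = \oo{M}$, and then
\[
\oo{MT^a} = \oo{MT^{qN}T^r} = \oo{\,\oo{MT^{qN}}\,T^r\,} = \oo{\oo{M}\,T^r} = \oo{MT^r},
\]
where the second equality is Lemma \ref{nested-cosets} and the last equality is Lemma \ref{nested-cosets} again. Comparing with the expansion of the right-hand side finishes the proof.

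The only mildly delicate point is the telescoping of $U^q(\oo{M},T^N)$, which works only because the ``trapping'' factor $\oo{M}^{-1}\oo{M}$ between consecutive copies collapses; this in turn relies crucially on Lemma \ref{coset-T-cycle}, without which $U(\oo{M},T^N)$ would not conjugate $\oo{M}$ by $T^N$. Everything else is bookkeeping with the bar function and Lemma \ref{nested-cosets}.
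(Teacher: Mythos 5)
Your proof is correct and is essentially the paper's own argument run in the opposite direction: both hinge on the same two facts, namely that Lemma \ref{coset-T-cycle} (with Lemma \ref{nested-cosets}) gives $U\big(\oo{M},T^N\big)=\oo{M}T^N\big(\oo{M}\big)^{-1}$, whose $q$-th power telescopes to $\oo{M}T^{qN}\big(\oo{M}\big)^{-1}$, and that $\oo{MT^a}=\oo{MT^r}$. The only difference is presentational (you simplify the right-hand side and compare, while the paper expands the left-hand side and inserts $\big(\oo{M}\big)^{-1}\oo{M}$), so there is nothing to flag.
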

\begin{proof}
Beginning with Definition \ref{Ufunction-def}, we have 
\begin{equation}\label{eq:reduce-power}
    U\big(\oo{M},\,T^a\big)=\oo{M}T^a\left(\oo{\oo{M}T^a}\right)^{-1}.
\end{equation}
By applying Lemma \ref{coset-T-cycle}, expanding $T^a$, and multiplying by the identity $I=(\oo{M})^{-1}\oo{M}$, (\ref{eq:reduce-power}) becomes
\[\oo{M}T^{qN}\big(\oo{M}\big)^{-1}\oo{M}T^r\big(\oo{\oo{M}T^r}\big)^{-1}.\]
By Lemmas \ref{nested-cosets} and \ref{coset-T-cycle}, $\oo{M}=\oo{\oo{M}}=\oo{\oo{M}T^N}$. Since $\oo{M}T^{qN}(\oo{M})^{-1}=\big(\oo{M}T^N(\oo{M})^{-1}\big)^q$, we get that (\ref{eq:reduce-power}) equals
$$\left(\oo{M}T^N\left(\oo{\oo{M}T^N}\right)^{-1}\right)^q\oo{M}T^r\left(\oo{\oo{M}T^r}\right)^{-1}.\qedhere$$
\end{proof}

Using Lemma \ref{lemma-T-cycle} in conjunction with Theorem \ref{thm:modified-reidemeister}, we can rewrite every word in the generators of $\SL$ as a word in the letters of a finite subset of $\Gamma_1(N)$. This is central to our algorithm in Section \ref{sec:algo}. However, before we are able to present this algorithm, we need a few more results describing the structure of congruence subgroups of $\SL$. One can find these results in many sources, including \cite{stein}.

\begin{lemma}\label{lemma:index} We have
\[[\Gamma_1(N):\Gamma_0(N)]=N\prod_{p|N}\left(1-\frac{1}{p}\right),\]
\[[\Gamma_0(N):\SL]=N\prod_{p|N}\left(1+\frac{1}{p}\right),\]
\[[\Gamma_1(N):\SL]=N^2\prod_{p|N}\left(1-\frac{1}{p^2}\right).\]
\end{lemma}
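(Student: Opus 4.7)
The plan is to prove each of the three index formulas using a combination of explicit group homomorphisms, transitive group actions, and the tower law, tackling them in the order in which they build on one another. (The displayed indices should be read with the smaller group on the right, e.g.\ $[\Gamma_0(N):\Gamma_1(N)]$ rather than the reverse, since $\Gamma_1(N) \subseteq \Gamma_0(N) \subseteq \SL$.)

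First, for $[\Gamma_0(N):\Gamma_1(N)]$, I would define the map $\Gamma_0(N) \to (\Z/N\Z)^\times$ sending $\begin{psmallmatrix}a&b\\c&d\end{psmallmatrix}$ to $d \bmod N$. The fact that $d$ is a unit follows from $ad - bc = 1$ combined with $c \equiv 0 \pmod{N}$, which forces $ad \equiv 1 \pmod{N}$. A direct calculation on products shows the map is a homomorphism, and the kernel is manifestly $\Gamma_1(N)$. For surjectivity, given any unit $d_0 \bmod N$ one lifts to a matrix of the form $\begin{psmallmatrix}a_0 & b_0 \\ N & d_0\end{psmallmatrix}$ using Bezout on $d_0, N$. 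The first isomorphism theorem then delivers the index as $\phi(N) = N\prod_{p\mid N}(1-1/p)$.

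Next, for $[\SL:\Gamma_1(N)]$, I would use the right action of $\SL$ on the set
\[
X = \{(c,d) \in (\Z/N\Z)^2 : \gcd(c,d,N) = 1\}
\]
by $(c,d) \cdot \gamma = (c,d)\gamma$. A brief computation shows that the stabilizer of $(0,1)$ is exactly $\Gamma_1(N)$, so by orbit-stabilizer the desired index equals $|X|$ provided the action is transitive. Finally, $|X|$ is computed by CRT: it factors as $\prod_{p^e \| N}(p^{2e} - p^{2e-2})$ (total pairs mod $p^e$ minus those where both coordinates vanish mod $p$), which simplifies to $N^2\prod_{p\mid N}(1 - 1/p^2)$.

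The third index then follows from the tower law:
\[
[\SL:\Gamma_0(N)] = \frac{[\SL:\Gamma_1(N)]}{[\Gamma_0(N):\Gamma_1(N)]} = \frac{N^2\prod_{p\mid N}(1 - 1/p^2)}{N\prod_{p\mid N}(1 - 1/p)} = N\prod_{p\mid N}(1 + 1/p),
\]
using the factorization $(1 - 1/p^2) = (1 - 1/p)(1 + 1/p)$. The main obstacle is verifying the transitivity of the $\SL$-action on $X$: given $(c_0, d_0) \bmod N$ with $\gcd(c_0,d_0,N)=1$, one must produce integer lifts $c, d$ with $\gcd(c,d) = 1$ so that the pair extends to a matrix in $\SL$. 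This requires a careful but standard Bezout/CRT adjustment (replace $d$ by $d + kN$ to clear unwanted common factors); once this lifting is in hand, every other step is purely formal.
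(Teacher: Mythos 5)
Your proof is correct, but note that the paper does not prove this lemma at all: it is quoted as standard, with a pointer to references such as \cite{stein}, so there is no internal argument to compare against. What you have written is the classical proof, and it is sound: the homomorphism $\Gamma_0(N)\to(\Z/N\Z)^{\times}$, $\begin{psmallmatrix}a&b\\c&d\end{psmallmatrix}\mapsto d\bmod N$, with kernel $\Gamma_1(N)$ (the congruence $a\equiv 1$ coming for free from $ad\equiv 1\pmod N$) gives the first index; the right action on $X=\{(c,d):\gcd(c,d,N)=1\}$ with stabilizer $\Gamma_1(N)$ at $(0,1)$ gives the second via orbit--stabilizer and the CRT count $|X|=N^2\prod_{p\mid N}(1-1/p^2)$; and the tower law yields the third. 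You are also right to read the paper's displayed indices as $[\Gamma_0(N):\Gamma_1(N)]$, $[\SL:\Gamma_0(N)]$, $[\SL:\Gamma_1(N)]$, i.e.\ with the smaller group inside the larger, which is what the stated values require. A pleasant byproduct of your route is that the two parametrizations you build are exactly the bijections the paper records separately as Lemmas \ref{cosets-g1-in-g0} and \ref{cosets-g1-in-sl2z}, so your argument in effect proves those statements too. The only place to be careful is the transitivity step you flag: when the residue $c\bmod N$ has a lift that is $0$ (or shares factors awkwardly), the ``adjust $d$ by multiples of $N$'' trick needs the companion observation that you may also replace the lift of $c$ by $c+N$ (e.g.\ take $c=N$ when $c\equiv 0$), after which the CRT argument goes through; spelling that out would close the one small gap in the sketch.
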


\begin{lemma}\label{cosets-g1-in-g0}
There exists a bijection $\Gamma_1(N)\backslash\Gamma_0(N)\to (\mathbb{Z}/N\mathbb{Z})^{\cross}$ via 
\[\begin{pmatrix}
    a & b \\
    c & d
\end{pmatrix}\mapsto d\bmod N.\]
\end{lemma}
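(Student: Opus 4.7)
The plan is to upgrade the map to a group homomorphism $\phi: \Gamma_0(N)\to(\mathbb{Z}/N\mathbb{Z})^{\times}$ sending $\begin{psmallmatrix}a&b\\c&d\end{psmallmatrix}\mapsto d\bmod N$, identify its kernel as $\Gamma_1(N)$, and show it is surjective. Then the first isomorphism theorem produces the required bijection (and in fact shows $\Gamma_1(N)$ is normal in $\Gamma_0(N)$, so the distinction between left and right cosets disappears).

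First I would check that $\phi$ lands in $(\mathbb{Z}/N\mathbb{Z})^{\times}$: for $\gamma=\begin{psmallmatrix}a&b\\c&d\end{psmallmatrix}\in\Gamma_0(N)$ we have $ad-bc=1$ and $N\mid c$, so $ad\equiv 1\pmod N$ and hence $d$ is a unit mod $N$. Next, for two matrices $\gamma_1,\gamma_2\in\Gamma_0(N)$ the bottom-right entry of their product is $c_1 b_2+d_1 d_2$, which reduces to $d_1 d_2\pmod N$ since $N\mid c_1$. So $\phi$ is a homomorphism.

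For the kernel, $\phi(\gamma)=1$ forces $d\equiv 1\pmod N$, and together with $ad\equiv 1\pmod N$ this gives $a\equiv 1\pmod N$; combined with $N\mid c$ this is precisely the condition defining $\Gamma_1(N)$. So $\ker\phi=\Gamma_1(N)$. For surjectivity, given any $d\in(\mathbb{Z}/N\mathbb{Z})^{\times}$, pick an integer lift $\tilde d$ with $\gcd(\tilde d,N)=1$; by B\'ezout there exist $a,b\in\mathbb{Z}$ with $a\tilde d-bN=1$, and then $\begin{psmallmatrix}a&b\\N&\tilde d\end{psmallmatrix}\in\Gamma_0(N)$ has image $d$.

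Applying the first isomorphism theorem yields $\Gamma_0(N)/\Gamma_1(N)\cong(\mathbb{Z}/N\mathbb{Z})^{\times}$, and since $\Gamma_1(N)$ is the kernel it is normal, so the right-coset set $\Gamma_1(N)\backslash\Gamma_0(N)$ coincides with the quotient group and is in bijection with $(\mathbb{Z}/N\mathbb{Z})^{\times}$ via the stated map. There is no real obstacle here; the only point that requires minor care is the surjectivity step, specifically lifting a residue class mod $N$ to an integer coprime to $N$, which is immediate since any lift of a unit is already coprime to $N$.
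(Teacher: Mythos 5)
Your proof is correct and complete: the paper itself gives no proof of this lemma (it defers to standard references such as \cite{stein}), and your argument --- realizing the map as the homomorphism $\gamma\mapsto d\bmod N$ on $\Gamma_0(N)$, checking $ad\equiv 1\pmod N$ so the image lies in $(\mathbb{Z}/N\mathbb{Z})^{\times}$, identifying the kernel with $\Gamma_1(N)$, proving surjectivity with a B\'ezout matrix $\begin{psmallmatrix}a&b\\N&\tilde d\end{psmallmatrix}$, and invoking the first isomorphism theorem --- is exactly the standard proof those references give. All the steps (well-definedness on right cosets, the homomorphism identity from $c_1\equiv 0\pmod N$, and the kernel computation using $d\equiv 1\Rightarrow a\equiv 1$) check out, so there is nothing to correct.
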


\begin{lemma}[{\cite[Proposition 8.6]{stein}}]\label{cosets-g1-in-sl2z}
Let $P=\{(c,d):c,d\in\Z/N\Z,\,\gcd(c,d,N)=1\}.$
There exists a bijection $\Gamma_1(N)\backslash\SL\to P$ via
$$\begin{pmatrix}
    a & b \\
    c & d
\end{pmatrix}\mapsto (c\bmod N,d\bmod N).$$
\end{lemma}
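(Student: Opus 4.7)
The plan is to verify that the assignment $\phi:\gamma\mapsto (c\bmod N, d\bmod N)$ is (i)~well-defined on right cosets, (ii)~takes values in $P$, (iii)~injective on cosets, and (iv)~surjective onto $P$. Parts (i)--(iii) reduce to straightforward matrix bookkeeping, while (iv) is the main obstacle and relies on a standard number-theoretic lifting.

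For (i), take $g=\begin{psmallmatrix}1+Nu & Nv\\ Nw & 1+Nx\end{psmallmatrix}\in\Gamma_1(N)$ and $\gamma=\begin{psmallmatrix}a&b\\c&d\end{psmallmatrix}\in\SL$; direct computation shows the bottom row of $g\gamma$ reduces to $(c,d)\bmod N$, so $\phi$ descends to the quotient. For (ii), reducing $ad-bc=1$ modulo $\gcd(c,d,N)$ forces $\gcd(c,d,N)\mid 1$. For (iii), suppose $\phi(\gamma_1)=\phi(\gamma_2)$, and write $\gamma_2^{-1}=\begin{psmallmatrix}d_2&-b_2\\-c_2&a_2\end{psmallmatrix}$; the bottom row of $\gamma_1\gamma_2^{-1}$ is $(c_1d_2-d_1c_2,\,-c_1b_2+d_1a_2)$, which reduces modulo $N$ to $(c_2d_2-d_2c_2,\,-c_2b_2+d_2a_2)=(0,1)$ using $\det\gamma_2=1$. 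The top-left entry of $\gamma_1\gamma_2^{-1}$ is then $\equiv 1\pmod N$ by its determinant condition, so $\gamma_1\gamma_2^{-1}\in\Gamma_1(N)$ and $\Gamma_1(N)\gamma_1=\Gamma_1(N)\gamma_2$.

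The main obstacle is (iv). Given $(c_0,d_0)\in P$, I must produce integer lifts $\tilde c\equiv c_0$ and $\tilde d\equiv d_0\pmod N$ with $\gcd(\tilde c,\tilde d)=1$; then Bézout supplies $\tilde a,\tilde b$ with $\tilde a\tilde d-\tilde b\tilde c=1$, giving a preimage in $\SL$. I fix $\tilde c$ as a nonzero lift of $c_0$ (taking $\tilde c=N$ if $c_0\equiv 0\pmod N$, which is permissible because in that case $\gcd(d_0,N)=1$) and search for $\tilde d=d_0+kN$ with $k\in\Z$. For each prime $p\mid\tilde c$: if $p\mid N$, then $p\mid c_0$, so the hypothesis $\gcd(c_0,d_0,N)=1$ forces $p\nmid d_0$, whence $p\nmid\tilde d$ automatically; if $p\nmid N$, then $N$ is invertible modulo $p$ and the condition $p\nmid d_0+kN$ rules out only the single residue class $k\equiv -d_0 N^{-1}\pmod p$. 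Since $\tilde c$ has finitely many prime divisors, the Chinese Remainder Theorem produces a valid $k$, completing the lift and thus the surjectivity.
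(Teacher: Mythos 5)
Your argument is correct, and it is essentially the standard proof of this fact, which the paper itself does not reprove but delegates to \cite[Proposition 8.6]{stein}: check well-definedness and injectivity on cosets by bottom-row computations, and get surjectivity by lifting $(c_0,d_0)$ with $\gcd(c_0,d_0,N)=1$ to a genuinely coprime integer pair and invoking B\'ezout. One small correction: a general element of $\Gamma_1(N)$ is $\begin{psmallmatrix}1+Nu & v\\ Nw & 1+Nx\end{psmallmatrix}$ with $v$ an arbitrary integer (your $Nv$ describes $\Gamma(N)$ instead); this is harmless here, since the upper-right entry of $g$ never appears in the bottom row of $g\gamma$, so the same computation gives well-definedness for all of $\Gamma_1(N)$.
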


\section{Algorithm}
In this section, we provide an algorithm for computing the generalized Dedekind sums and a time complexity analysis for each step of the algorithm. We also compare our algorithm to the naive algorithm of simply using Definition \ref{SVYDEF}. We divide our algorithm into precomputations (which only needs to be computed once for each pair of characters) and the main computation.

\subsection{Stating the Algorithm}\label{sec:algo}
Let $N=q_1q_2$ for primitive Dirichlet characters $\chi_1,\chi_2$ with respective conductors $q_1,q_2.$ Let $q_1,q_2>1$ and $\chi_1\chi_2(-1)=1.$ Given $\gamma_0=\begin{psmallmatrix}
a&b\\c&d
\end{psmallmatrix}\in\Gamma_0(N),$ we present an algorithm to find $S_{\chi_1,\chi_2}(\gamma_0).$ 

\subsubsection*{Group Theoretic Precomputation}\label{group precomp}
\begin{itemize}
    \item Find a right transversal $\mathcal{T}_{\Gamma_0}$ of $\Gamma_1(N)$ in $\Gamma_0(N)$ (using Lemma \ref{cosets-g1-in-g0}).
    \item Find a right transversal $\mathcal{T}_{\SL}$ of $\Gamma_1(N)$ in $\SL$ (using Lemma \ref{cosets-g1-in-sl2z}).
    \item Find the set $\mathcal{U}=\{U(t,T^i):t\in\mathcal{T}_{\SL},1\leq i\leq N\} \cup \{U(t,S^k):t\in\mathcal{T}_{\SL},0\leq k\leq 2\}$. Note this set includes the set of the Schreier generators of $\Gamma_1(N)$ in $\SL$ (see Lemma \ref{Schreier}).
\end{itemize}

\subsubsection*{Dedekind Sum Precomputation}
\begin{itemize}
    \item Use Definition \ref{SVYDEF} to compute the Dedekind sums $S_{\chi_1,\chi_2}\big(\mathcal{T}_{\Gamma_0}\big).$
    \item Use Definition \ref{SVYDEF} to compute the Dedekind sums $S_{\chi_1,\chi_2}(\mathcal{U})$.
\end{itemize}

\subsubsection*{The Main Computation}
We write $\gamma_0=\gamma_1g,$ where $\gamma_1\in\Gamma_1(N)$ and $g\in\mathcal{T}_{\Gamma_0}.$ Let $g\mapsto\oo{g}$ denote the right coset representative function uniquely described by $\mathcal{T}_{\SL}$ per Definition \ref{bar-function-def}. 
By Lemma \ref{lemma-cross-hom},
$$S_{\chi_1,\chi_2}(\gamma_0)=S_{\chi_1,\chi_2}(\gamma_1)+S_{\chi_1,\chi_2}(g).$$
Since $g\in\mathcal{T}_{\Gamma_0}$, $S_{\chi_1,\chi_2}(g)$ has been precomputed, so we are now only concerned with $S_{\chi_1,\chi_2}(\gamma_1).$
Using Lemma \ref{TS-Decomposition}, we write \[\gamma_1=\pm T^{a_1}ST^{a_2}S\ldots T^{a_{r-1}}ST^{a_r}.\] Using Theorem \ref{thm:modified-reidemeister}, we rewrite 
\begin{equation}\label{maincomp1}
\tau(\gamma_1)=U(\oo{p_1},T^{a_1})U(\oo{p_1T^{a_1}},S)U(\oo{p_2},T^{a_2})U(\oo{p_2T^{a_2}},S)\cdots U(\oo{p_r},T^{a_r})U(\oo{p_rT^{a_r}},\pm I)=\gamma_1,
\end{equation}
where
$$p_k=T^{a_1}ST^{a_2}S\ldots T^{a_{k-1}}S.$$
Now we apply Lemma \ref{lemma-T-cycle}. For each exponent of $T$, we write $a_i=q_iN+r_i$ with $0\leq r_i<N$. Then
\begin{equation}\label{maincomp2}
U\big(\oo{p_i},T^{a_i}\big)=U^{q_i}\big(\oo{p_i},T^N\big)U\big(\oo{p_i},\,T^{r_i}\big).
\end{equation}
We apply the Dedekind sum to (\ref{maincomp2}). Since $\mathcal{U}\subset\Gamma_1(N)$, by Lemma \ref{lemma-cross-hom},
\begin{equation}\label{eq:sum}
    S_{\chi_1,\chi_2}\left(U\big(\oo{p_i},\,T^a\big)\right)=q_iS_{\chi_1,\chi_2}\left(U\big(\oo{p_i},T^N\big)\right)+S_{\chi_1,\chi_2}\left(U\big(\oo{p_i},T^{r_i}\big)\right).
\end{equation}

Using ($\ref{maincomp1}$) and ($\ref{maincomp2}$), we can express $\gamma_1$ as a product of elements in $\mathcal{U}$. Applying Lemma \ref{lemma-cross-hom} to $S_{\chi_1,\chi_2}(\tau(\gamma_1))$ and expanding via (\ref{eq:sum}), we acquire the desired Dedekind sum, each term of which has been precomputed.

\begin{remark}
Given $\mathcal{T}_{\SL}$ and $g\in\SL$, we can use Proposition \ref{cosets-g1-in-sl2z} to determine $\oo{g}$.
\end{remark}

\subsubsection*{Example Computation}
Consider $\Gamma_0(9).$ Let $\chi_1=\chi_2$ be the primitive character modulo $3$ with conductors $q_1=q_2=3.$
We want to compute $S_{\chi_1,\chi_2}(\gamma_0)$ where
$$\gamma_0=\begin{pmatrix}
    17 & 32 \\
    9 & 17
\end{pmatrix}.$$
From the precomputations, we acquire
a right transversal of $\Gamma_1(9)$ in $\Gamma_0(9)$
$$\mathcal{T}_{\Gamma_0}=\left\{\I,\begin{psmallmatrix}
    5 & 1 \\
    9 & 2
\end{psmallmatrix},\begin{psmallmatrix}
    7 & 3 \\
    9 & 4
\end{psmallmatrix},\begin{psmallmatrix}
    2 & 1 \\
    9 & 5
\end{psmallmatrix},\begin{psmallmatrix}
    4 & 3 \\
    9 & 7
\end{psmallmatrix},\begin{psmallmatrix}
    8 & 7 \\
    9 & 8
\end{psmallmatrix}\right\},$$
a right transversal of $\Gamma_1(9)$ in $\SL$
\begin{align*}
    \mathcal{T}_{\SL}=\big\{&\I,\begin{psmallmatrix}
        5 & 1 \\
        9 & 2
    \end{psmallmatrix},\begin{psmallmatrix}
        7 & 3 \\
        9 & 4
    \end{psmallmatrix},\begin{psmallmatrix}
        2 & 1 \\
        9 & 5
    \end{psmallmatrix},\begin{psmallmatrix}
        4 & 3 \\
        9 & 7
    \end{psmallmatrix},
    \cdots ,\begin{psmallmatrix}
        5 & 8 \\
        8 & 13
    \end{psmallmatrix},\begin{psmallmatrix}
        5 & 3 \\
        8 & 5
    \end{psmallmatrix},\begin{psmallmatrix}
        7 & 13 \\
        8 & 15
    \end{psmallmatrix},\begin{psmallmatrix}
        7 & 6 \\
        8 & 7
    \end{psmallmatrix},\begin{psmallmatrix}
        1 & 2 \\
        8 & 17
    \end{psmallmatrix}\big\},
\end{align*}
and the set
\[\mathcal{U}=\{U(t,T^i):t\in\mathcal{T}_{\SL},1\leq i\leq 9\} \cup \{U(t,S^k):t\in\mathcal{T}_{\SL},0\leq k\leq 2\}.\]
From the precomputations, we also acquire the Dedekind sums $S_{\chi_1,\chi_2}(\mathcal{T}_{\Gamma_0})$ and $S_{\chi_1,\chi_2}(\mathcal{U}).$
By Lemma \ref{cosets-g1-in-g0}, we can write $\gamma_0=\gamma_1 g,$ where
$$\gamma_1=\begin{psmallmatrix}
        -152 & 137 \\
        -81 & 73
    \end{psmallmatrix}\in\Gamma_1(9)\qquad\text{and}\qquad g=\begin{psmallmatrix}
        8 & 7 \\
        9 & 8
    \end{psmallmatrix}\in\mathcal{T}_{\Gamma_0}.\;$$
Since $g\in\mathcal{T}_{\Gamma_0}$, $S_{\chi_1,\chi_2}(g)$ has been precomputed, so now we only need to compute $S_{\chi_1,\chi_2}(\gamma_1).$
Using Lemma \ref{TS-Decomposition}, we compute
\begin{equation}\label{eq:example-TS}
    \gamma_1=-T^{1}ST^{-2}ST^{-2}ST^{-2}ST^{-2}ST^{-2}ST^{-2}ST^{-2}ST^{-11}ST^{-1}.
\end{equation}

Applying Theorem \ref{thm:modified-reidemeister} to 
(\ref{eq:example-TS}) with all $p_i$ written in matrix forms, we get
\begin{align*}
\tau(\gamma_1)=&U\big(\oo{\begin{psmallmatrix}
    1 & 0 \\
    0 & 1
\end{psmallmatrix}},T^1\big)U\big(\oo{\begin{psmallmatrix}
    1 & 1 \\
    0 & 1
\end{psmallmatrix}},S\big)U\big(\oo{\begin{psmallmatrix}
    1 & -1 \\
    1 & 0
\end{psmallmatrix}},T^{-2}\big)U\big(\oo{\begin{psmallmatrix}
    1 & -3 \\
    1 & -2
\end{psmallmatrix}},S\big)
    \cdots \\
    &\cdots U\big(\oo{\begin{psmallmatrix}
        -15 & -13 \\
        -8 & -7
    \end{psmallmatrix}},T^{-11}\big)U\big(\oo{\begin{psmallmatrix}
        -15 & 152 \\
        -8 & 81
    \end{psmallmatrix}},S\big)U\big(\oo{\begin{psmallmatrix}
        152 & 15 \\
        81 & 8
    \end{psmallmatrix}},T^{-1}\big)U\big(\oo{\begin{psmallmatrix}
        152 & -137 \\
        81 & -73
    \end{psmallmatrix}},-I\big)=\gamma_1.
\end{align*}
Applying Lemmas \ref{lemma-T-cycle} and \ref{cosets-g1-in-sl2z} to each term of the above product, we get the following computation.

\begin{center}
\begin{tabular}{l l}
    $U\big(\oo{\begin{psmallmatrix}
        1 & 0 \\
        0 & 1
    \end{psmallmatrix}},T^1\big)=U^0(\begin{psmallmatrix}
        1 & 0 \\
        0 & 1
    \end{psmallmatrix},T^9)U(\begin{psmallmatrix}
        1 & 0 \\
        0 & 1
    \end{psmallmatrix},T^1)$, & $U\big(\oo{\begin{psmallmatrix}
        1 & 1 \\
        0 & 1
    \end{psmallmatrix}},S\big)=U(\begin{psmallmatrix}
        1 & 0 \\
        0 & 1
    \end{psmallmatrix}, S)$, \\
    $U\big(\oo{\begin{psmallmatrix}
        1 & -1 \\
        1 & 0
    \end{psmallmatrix}},T^{-2}\big)=U^{-1}(\begin{psmallmatrix}
        1 & 8 \\
        1 & 9
    \end{psmallmatrix},T^9)U(\begin{psmallmatrix}
        1 & 8 \\
        1 & 9
    \end{psmallmatrix},T^7)$, & $U\big(\oo{\begin{psmallmatrix}
        1 & -3 \\
        1 & -2
    \end{psmallmatrix}},S\big)=U(\begin{psmallmatrix}
        1 & 6 \\
        1 & 7
    \end{psmallmatrix},S)$, \\
    
    \qquad\qquad\qquad\qquad\qquad\qquad\qquad\qquad\qquad\qquad\qquad\vdots\\
    $U\big(\oo{\begin{psmallmatrix}
        -15 & -13 \\
        -8 & -7
    \end{psmallmatrix}},T^{-11}\big)=U^{-2}(\begin{psmallmatrix}
        1 & 1 \\
        1 & 2
    \end{psmallmatrix},T^9)U(\begin{psmallmatrix}
        1 & 1 \\
        1 & 2
    \end{psmallmatrix},T^7)$, & $U\big(\oo{\begin{psmallmatrix}
        -15 & 152 \\
        -8 & 81
    \end{psmallmatrix}},S\big)=U(\begin{psmallmatrix}
        1 & 8 \\
        1 & 9
    \end{psmallmatrix},S)$, \\
    $U\big(\oo{\begin{psmallmatrix}
        152 & 15 \\
        81 & 8
    \end{psmallmatrix}},T^{-1}\big)=U^{-1}(\begin{psmallmatrix}
        8 & 7 \\
        9 & 8
    \end{psmallmatrix},T^9)U(\begin{psmallmatrix}
        8 & 7 \\
        9 & 8
    \end{psmallmatrix},T^8)$, & $U\big(\oo{\begin{psmallmatrix}
        152 & -137 \\
        81 & -73
    \end{psmallmatrix}},-I\big)=U(\begin{psmallmatrix}
        8 & 7 \\
        9 & 8
    \end{psmallmatrix},S^2).$
\end{tabular}
\end{center}

Note that every term on the right hand side of these equalities are in the precomputed set $\mathcal{U}.$ Thus, using Lemma \ref{lemma-cross-hom}, 
\begin{align*}
    S_{\chi_1,\chi_2}(\gamma_1)=0\cdot S_{\chi_1,\chi_2}(U(\begin{psmallmatrix}
        1 & 0 \\
        0 & 1
    \end{psmallmatrix},T^9))&+S_{\chi_1,\chi_2}(U(\begin{psmallmatrix}
        1 & 0 \\
        0 & 1
    \end{psmallmatrix},T^1))+S_{\chi_1,\chi_2}(U(\begin{psmallmatrix}
        1 & 0 \\
        0 & 1
    \end{psmallmatrix},S)) \\
    -1\cdot S_{\chi_1,\chi_2}(U(\begin{psmallmatrix}
        1 & 8 \\
        1 & 9
    \end{psmallmatrix},T^9))&+S_{\chi_1,\chi_2}(U(\begin{psmallmatrix}
        1 & 8 \\
        1 & 9
    \end{psmallmatrix},T^7))+S_{\chi_1,\chi_2}(U(\begin{psmallmatrix}
        1 & 6 \\
        1 & 7
    \end{psmallmatrix},S)) \\
    &\qquad\qquad\quad\;\,\vdots \\
    -2\cdot S_{\chi_1,\chi_2}(U(\begin{psmallmatrix}
        1 & 1 \\
        1 & 2
    \end{psmallmatrix},T^9))&+S_{\chi_1,\chi_2}(U(\begin{psmallmatrix}
        1 & 1 \\
        1 & 2
    \end{psmallmatrix},T^7))+S_{\chi_1,\chi_2}(U(\begin{psmallmatrix}
        1 & 8 \\
        1 & 9
    \end{psmallmatrix},S)) \\
    -1\cdot S_{\chi_1,\chi_2}(U(\begin{psmallmatrix}
        8 & 7 \\
        9 & 8
    \end{psmallmatrix},T^9))&+S_{\chi_1,\chi_2}(U(\begin{psmallmatrix}
        8 & 7 \\
        9 & 8
    \end{psmallmatrix},T^8))+S_{\chi_1,\chi_2}(U(\begin{psmallmatrix}
        8 & 7 \\
        9 & 8
    \end{psmallmatrix},S^2)).
\end{align*}
Now, using the precomputed Dedekind sums,
\(S_{\chi_1,\chi_2}(\gamma_0)=S_{\chi_1,\chi_2}(\gamma_1)+S_{\chi_1,\chi_2}(g)=0.\)
\begin{remark}
This example shows that $\gamma_0=\begin{psmallmatrix}
    17 & 32 \\
    9 & 17
\end{psmallmatrix}$ lies in the kernel of this Dedekind sum. For more information on the kernel of Dedekind sums, we refer the reader to \cite{kernel, kernel2}.
\end{remark}

\subsection{Analysis of Algorithm}\label{section:algo-analysis}

First we discuss a simplified model of our computation. We consider a matrix multiplication as one operation. Since we work in $\SL$, we assume computing the inverse of a matrix takes constant time. Since the Dedekind sums lie in cyclotomic extensions of $\mathbb{Q}$, we can represent any Dedekind sum as a linear combination of powers of a given root of unity. Adding and multiplying these linear combinations is rather trivial, so we assume that addition and multiplication of Dedekind sums take constant time.

\begin{lemma}\label{constant-coset}
Given $g\in\SL$ and the right transversals $\mathcal{T}_{\SL}$ or $\mathcal{T}_{\Gamma_0}$, finding $\oo{g}$ under the right coset representative function for $\Gamma_1(N)\backslash\Gamma_0(N)$ or $\Gamma_1(N)\backslash\SL$ requires $O(1)$ time.
\end{lemma}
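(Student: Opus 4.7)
The plan is to leverage the bijections of Lemmas \ref{cosets-g1-in-g0} and \ref{cosets-g1-in-sl2z} to reduce the problem of finding $\oo{g}$ to a constant-time lookup in a finite indexing set. For the $\Gamma_1(N)\backslash\Gamma_0(N)$ case, given $g=\begin{psmallmatrix}a & b\\ c & d\end{psmallmatrix}\in\Gamma_0(N)$, Lemma \ref{cosets-g1-in-g0} tells us that the coset $\Gamma_1(N)g$ is determined entirely by $d\bmod N$. For the $\Gamma_1(N)\backslash\SL$ case, given $g\in\SL$, Lemma \ref{cosets-g1-in-sl2z} tells us that the coset is determined by the pair $(c\bmod N,\,d\bmod N)\in P$.

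First, I would observe that, during the precomputation step described in Section \ref{sec:algo}, one may simultaneously build a lookup table indexed by $(\Z/N\Z)^\times$ (respectively by $P$) whose entries are the representatives in $\mathcal{T}_{\Gamma_0}$ (respectively $\mathcal{T}_{\SL}$). Because $N=q_1q_2$ is fixed, both $(\Z/N\Z)^\times$ and $P$ are finite sets of bounded size, and so are the corresponding transversals, which means a constant-size table suffices. Under the computational model of Section \ref{section:algo-analysis}, reducing an integer entry of $g$ modulo the fixed modulus $N$ is a constant-time arithmetic operation, and accessing a fixed-size table is also a constant-time operation.

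Putting these together, the procedure to compute $\oo{g}$ is: extract the relevant entry or entries of $g$, reduce modulo $N$, and read off the matching transversal element from the precomputed table. Each step is $O(1)$ in the model at hand, so the composed operation is $O(1)$ as claimed.

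The step requiring the most care is the justification that the lookup table is legitimately ``constant size,'' since its cardinality depends on $N$; however, because the theorem treats $q_1,q_2$ (and hence $N$) as fixed, this dependence is absorbed into the implicit constant. No genuine obstacle appears here — the lemma is essentially a bookkeeping statement that the bijections of Lemmas \ref{cosets-g1-in-g0} and \ref{cosets-g1-in-sl2z} can be realized as constant-time hash lookups once the relevant transversal has been precomputed.
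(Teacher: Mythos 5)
Your proposal is correct and follows the same route as the paper, which simply invokes the bijections of Lemmas \ref{cosets-g1-in-g0} and \ref{cosets-g1-in-sl2z}; you merely spell out the implicit table-lookup and reduction modulo the fixed $N$ in more detail. No discrepancy to report.
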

\begin{proof}
We simply use the bijections stated in Lemmas \ref{cosets-g1-in-g0} and \ref{cosets-g1-in-sl2z} to find the transversals.
\end{proof}

\begin{proposition}
In Section \ref{sec:algo}, the time complexity of the group theoretic precomputations is $O(N^3).$
\end{proposition}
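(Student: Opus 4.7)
The plan is to handle the three bullets of the group theoretic precomputation in turn, using Lemma \ref{lemma:index} to bound the sizes of the two transversals and Lemma \ref{constant-coset} to bound the per-element cost of the coset lookups that appear in the construction of $\mathcal{U}$. The three costs will then simply be added, and since each individual step is a routine counting argument, the only thing to be careful about is the size of $\mathcal{U}$, which is where the dominant $N^3$ factor comes from.

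First I would bound the two transversals. By Lemma \ref{lemma:index}, $|\mathcal{T}_{\Gamma_0}| = [\Gamma_0(N):\Gamma_1(N)] \le N$ and $|\mathcal{T}_{\SL}| = [\SL:\Gamma_1(N)] \le N^2$. The bijections of Lemmas \ref{cosets-g1-in-g0} and \ref{cosets-g1-in-sl2z} reduce the construction to enumerating $(\Z/N\Z)^{\times}$ and the set $P$, respectively, and then lifting each piece of coset data to an explicit $\SL$ matrix via an extended Euclidean step on integers of size $O(N)$. In our computational model this lift is $O(1)$ per representative, so $\mathcal{T}_{\Gamma_0}$ and $\mathcal{T}_{\SL}$ are constructed in $O(N)$ and $O(N^2)$ time respectively.

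Next I would bound the cost of building $\mathcal{U}$. Directly from the definition,
\[|\mathcal{U}| \le N \cdot |\mathcal{T}_{\SL}| + 3 \cdot |\mathcal{T}_{\SL}| = O(N^3),\]
where the powers $T^i$ for $1 \le i \le N$ and $S^k$ for $0 \le k \le 2$ are available in $O(1)$ each via their closed form. For every such pair $(t,g)$ the evaluation $U(t,g) = tg(\oo{tg})^{-1}$ consists of a constant number of matrix multiplications and one inversion together with a single application of the right coset representative function, which by Lemma \ref{constant-coset} is $O(1)$. Hence $\mathcal{U}$ is assembled in $O(N^3)$ time.

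Summing the three contributions yields a total precomputation cost of $O(N) + O(N^2) + O(N^3) = O(N^3)$, as claimed. The main point to flag is that intermediate matrix entries grow polynomially in $N$, so the bound tacitly relies on the convention made at the start of Section \ref{section:algo-analysis} that a matrix multiplication or inversion in $\SL$ counts as one operation; under this convention no further obstacle arises, and the dominant cost is incurred by the $O(N^3)$ elements of $\mathcal{U}$.
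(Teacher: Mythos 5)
Your proposal is correct and follows essentially the same route as the paper: the dominant cost is the construction of $\mathcal{U}$, whose size is bounded by $(N+3)|\mathcal{T}_{\SL}|=O(N^3)$, with each element computed in $O(1)$ time under the stated model using Lemma \ref{constant-coset}. The extra detail you give on constructing the two transversals is a harmless elaboration of the same argument.
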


\begin{proof}

The most computationally expensive step is finding the set $\mathcal{U}=\{U(t,T^i):t\in\mathcal{T}_{\SL},1\leq i\leq N\} \cup \{U(t,S^k):t\in\mathcal{T}_{\SL},0\leq k\leq 2\}.$ This set has at most $(N+3)|\mathcal{T}_{\SL}|$ elements, thus $|\mathcal{U}|$ grows as $O(N^3).$ Since we define matrix multiplication as one operation and determining the transversal of an element under the right coset representative function takes $O(1)$ time (by Lemma \ref{constant-coset}), it follows that the total time to compute the elements of $\mathcal{U}$ takes $O(N^3)$ time.\qedhere

\end{proof}

\begin{proposition}
Given $\gamma\in\Gamma_0(N)$, the time complexity of the Dedekind sum  precomputations for finding $S_{\chi_1,\chi_2}(\mathcal{T}_{\Gamma_0}\cup\mathcal{U})$ is $O(N^3Cq_1),$ where $C$ denotes the maximum absolute value of the lower-left entry of the elements in $\mathcal{T}_{\Gamma_0}\cup \mathcal{U}.$
\end{proposition}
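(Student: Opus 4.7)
The plan is to bound the cost per element and then multiply by the total number of elements in $\mathcal{T}_{\Gamma_0}\cup\mathcal{U}$.

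First, observe that for any $\gamma=\begin{psmallmatrix}a&b\\c&d\end{psmallmatrix}\in\Gamma_0(N)$, Definition \ref{SVYDEF} expresses $S_{\chi_1,\chi_2}(\gamma)$ as a double sum with $c$ outer terms and $q_1$ inner terms, each term being a product of a bounded number of constant-time operations (evaluating the characters and the Bernoulli function $B_1$). Under our computational model, where arithmetic in the appropriate cyclotomic extension takes constant time, computing $S_{\chi_1,\chi_2}(\gamma)$ directly from the definition therefore takes $O(cq_1)$ time. In particular, for any element of $\mathcal{T}_{\Gamma_0}\cup\mathcal{U}$, whose lower-left entry has absolute value at most $C$, a single Dedekind sum costs $O(Cq_1)$.

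Next I would count $|\mathcal{T}_{\Gamma_0}\cup\mathcal{U}|$. By Lemma \ref{lemma:index}, $|\mathcal{T}_{\Gamma_0}|=[\Gamma_0(N):\Gamma_1(N)]=O(N)$. For $\mathcal{U}$, the bound $|\mathcal{U}|\leq (N+3)|\mathcal{T}_{\SL}|=O(N^3)$ has already been established in the preceding proposition (using Lemma \ref{lemma:index} to see $|\mathcal{T}_{\SL}|=O(N^2)$). Hence $|\mathcal{T}_{\Gamma_0}\cup\mathcal{U}|=O(N^3)$.

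Combining, the total cost of computing $S_{\chi_1,\chi_2}$ on every element of $\mathcal{T}_{\Gamma_0}\cup\mathcal{U}$ is at most $O(N^3)\cdot O(Cq_1)=O(N^3 Cq_1)$, as claimed. There is no real obstacle here: the only conceptual step is noticing that the naive definition gives $O(cq_1)$ per sum, and all remaining work consists of reusing the cardinality bounds already established, together with the assumption from Section \ref{section:algo-analysis} that cyclotomic arithmetic is constant time.
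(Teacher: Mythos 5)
Your proposal is correct and follows essentially the same argument as the paper: bound the number of elements of $\mathcal{T}_{\Gamma_0}\cup\mathcal{U}$ by $O(N^3)$ via Lemma \ref{lemma:index}, note each sum costs $O(cq_1)\leq O(Cq_1)$ from Definition \ref{SVYDEF}, and multiply. No gaps to report.
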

Note that $C$ is solely dependent on the value of $N$.
\begin{proof}
Using Lemma \ref{lemma:index}, we have $$[\Gamma_1(N):\Gamma_0(N)]+|\mathcal{U}|=N\Big(\prod_{p|N}\big(1-\frac{1}{p}\big)\Big)+N^2(N+3)\Big(\prod_{p|N}\big(1-\frac{1}{p^2}\big)\Big)=O(N^3).$$
Since it takes $O(cq_1)$ steps to compute the Dedekind sum of a matrix with lower-left entry $c$ from Definition \ref{SVYDEF}, it takes $O(N^3Cq_1)$ steps to compute the Dedekind sum of all elements in $\mathcal{T}_{\Gamma_0}\cup\mathcal{U}.$ 
\end{proof}

\begin{definition}
Let $\gamma=\pm T^{a_1}ST^{a_2}S\cdots T^{a_k}\in\SL$. We say $T^{a_i}$ or $S$ is a letter, and $2k$ is the length of $\gamma$.
\end{definition}

Recall that Theorem \ref{thm:main} states that the main computation algorithm provided in Section \ref{sec:algo} has time complexity $O(\log(c)).$ We provide a proof below.

\begin{proof}[Proof of Theorem \ref{thm:main}] \label{proof-of-main}

We claim that the time complexity for the modified Reidemeister rewriting process (Theorem \ref{thm:modified-reidemeister}) is $O(\log(c)).$ 
In (\ref{maincomp1}), given $p_i$, each $p_{i+1}$ takes one operation to compute (through multiplying by a matrix on the right). We know that $2k$ grows in $O(\log(c))$ due to Lemma \ref{TS-Decomposition}, hence it takes $O(\log(c))$ steps to compute all $p_i$.
By Lemma \ref{constant-coset} given $g\in\SL,$ finding the corresponding element $\oo{g}$ in $\mathcal{T}_{\Gamma_0}$ has complexity $O(1)$. Hence in (\ref{maincomp1}), each of the $U$-functions takes $O(1)$ time to compute. Since $\gamma_1$ has length $2k$, it also takes $O(\log(c))$ steps to compute the $U$-functions given $p_i.$ Hence the time complexity for the modified Reidemeister rewriting process is $O(\log(c))+O(\log(c))=O(\log(c)).$

   
 Note that reducing the power of a generator in (\ref{maincomp2}) using Lemma \ref{lemma-T-cycle} and looking up the precomputed Dedekind sum of each letter both have complexity $O(1)$, which does not affect the time complexity of the algorithm.
\end{proof}
\subsection{Comparison of Algorithms}
In this section, we give some experimental evidence for the speed of our algorithm in comparison to that which uses Definition \ref{SVYDEF} (using the implementations found in Section \ref{code})
\begin{example}

Consider $\Gamma_0(28)$. Let $\chi_1$ be the primitive Dirichlet character with conductor $q_1=4$, and let $\chi_2$ be the primitive Dirichlet character with conductor $q_2=7$ such that $\chi_2(3)=\exp(2\pi i(5/6))$. We let $\gamma=\begin{psmallmatrix}a&b\\c&d\end{psmallmatrix}$ where $c=28k$, $0<a<c$, and $\gcd(a,c)=1.$ We choose $b$ and $d$ such that the exponent $a_r$ is $0$ after applying Lemma \ref{TS-Decomposition}. We compute the Dedekind sum $S_{\chi_1,\chi_2}(\gamma)$ of all matrices that satisfy the conditions, and graph the logarithm of the average time it takes to compute each $k$.

\begin{center}
\includegraphics[scale = 0.7]{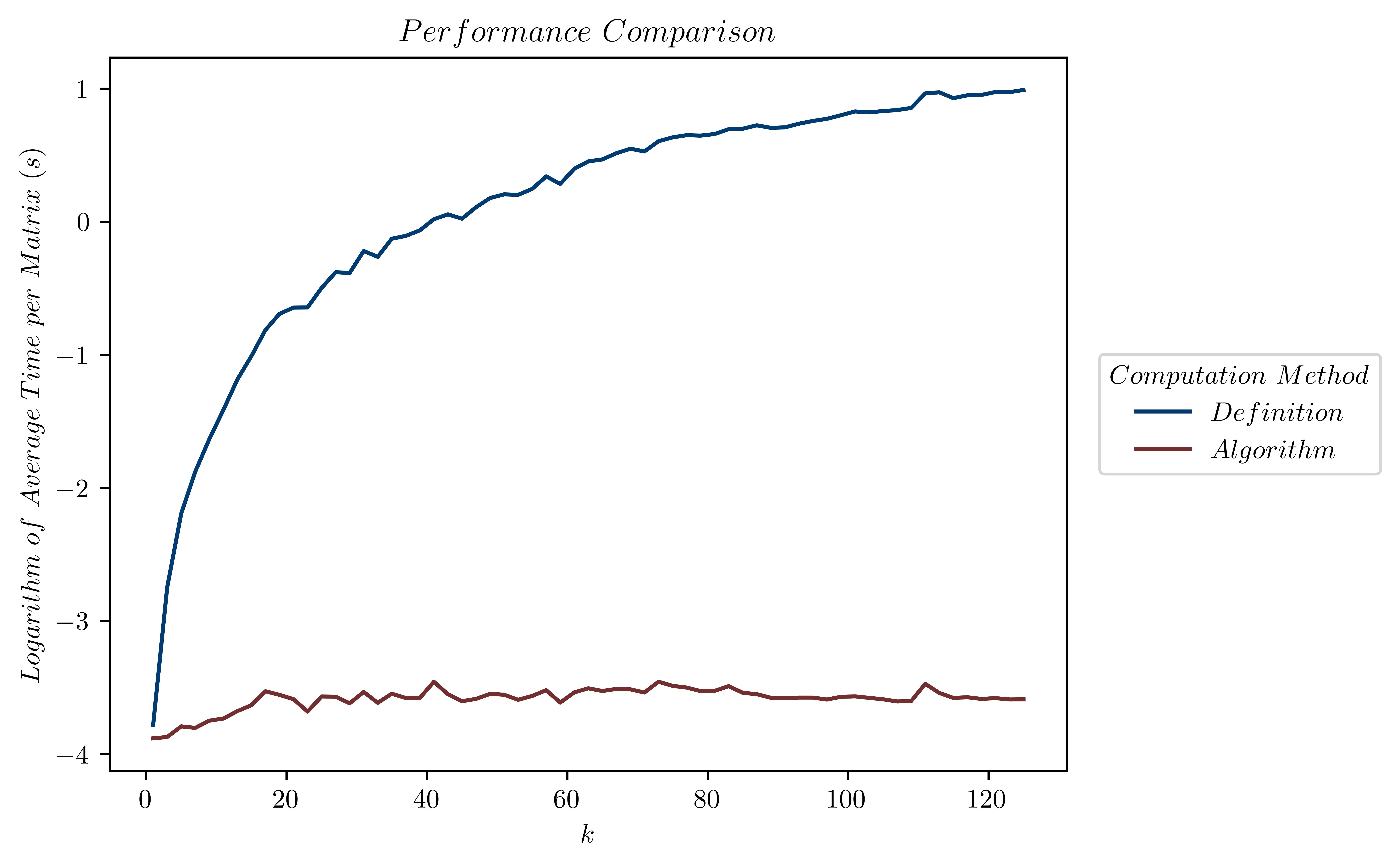}
\end{center}

 Note that the performance of the algorithm always exceeded that of the definition for this pair of characters.
\end{example}

\begin{example}
Now we present an example for a large matrix. Consider $\Gamma_0(35).$ Let $\chi_1$ be the primitive Dirichlet character with conductor $q_1=5$ such that $\chi_1(2)=-i$, and let $\chi_2$ be the primitive Dirichlet character with conductor $q_2=7$ such that $\chi_2(3)=\exp(2\pi i(1/3))$. Let $\gamma=\begin{pmatrix}
    46741638 & 43234369 \\
    43234205 & 39990117
\end{pmatrix}$.
Computing $S_{\chi_1,\chi_2}(\gamma)$ by Definition \ref{SVYDEF} takes $5.531\ast 10^4$ seconds (around $15$ hours), whereas it takes $5.128\ast 10^{-2}$ seconds using our algorithm.
\end{example}
\subsection{Code}\label{code}
The algorithm discussed has been implemented using $\texttt{Sage}$. The reader can find the code at \url{https://github.com/prestontranbarger/NFDSFastComputation}.

\section*{Acknowledgements}
This research was conducted at the $2022$ REU hosted at Texas A$\&$M University and supported by the National Science Foundation (DMS-$2150094$). The authors would like to thank Dr. Matthew Young for his continued support and input throughout the duration of the REU. The authors would like to thank Agniva Dasgupta for his immense help and feedback. The authors would also like to thank Mitch Majure for his input and contributions. 

\bibliographystyle{alpha}
\bibliography{bib}

\end{document}